\date{August 14, 2008}
\newcommand{\SA}{{\mathcal{A}}}
\newcommand{\SE}{{\mathcal{E}}}
\newcommand{\SG}{{\mathcal{G}}}
\newcommand{\SH}{{\mathcal{H}}}
\newcommand{\SM}{{\mathcal{M}}}
\newcommand{\SO}{{\mathcal{O}}}
\newcommand{\cS}{{\mathcal{S}}}
\newcommand{\PP}{\mathbb{P}}
\newcommand{\ZZ}{\mathbb{Z}}
\newcommand{\CC}{\mathbb{C}}
\newcommand{\BA}{\mathbb{A}}
\newcommand{\FM}{\mathfrak{M}}
\newcommand{\Ext}{\operatorname{Ext}}
\newcommand{\Hom}{\operatorname{Hom}}
\newcommand{\Pic}{\operatorname{Pic}}
\newcommand{\inj}{\hookrightarrow}
\newcommand{\too}{\longrightarrow}
\newcommand{\rk}{\operatorname{rk}}
\newcommand{\wt}{\widetilde}
\newcommand{\GL}{\operatorname{GL}}
\newcommand{\slr}{\operatorname{SL}_r}
\newcommand{\PGL}{\operatorname{PGL}}
\newcommand{\gitq}{/\!\!/}
\newcommand{\grq}{\operatorname{gr}}
\newtheorem{proposition}{Proposition}[section]
\newtheorem{theorem}[proposition]{Theorem}
\newtheorem{lemma}[proposition]{Lemma}
\newtheorem{corollary}[proposition]{Corollary}
\numberwithin{equation}{section}
\begin{document}

\title[Torelli for moduli of framed bundles]{Torelli
theorem for the moduli space of framed bundles}

\author[I. Biswas]{Indranil Biswas}

\address{School of Mathematics, Tata Institute of Fundamental
Research, Homi Bhabha Road, Bombay 400005, India}

\email{indranil@math.tifr.res.in}

\author[T. G\'omez]{Tom\'as L. G\'omez}

\address{Instituto de Ciencias Matem\'aticas CSIC-UAM-UC3M-UCM,
Serrano 113bis, 28006 Madrid, Spain; and
Facultad de Ciencias Matem\'aticas,
Universidad Complutense de Madrid, 28040 Madrid, Spain}

\email{tomas.gomez@mat.csic.es}

\author[V. Mu\~noz]{Vicente Mu\~noz}

\address{Instituto de Ciencias Matem\'aticas CSIC-UAM-UC3M-UCM,
Serrano 113bis, 28006 Madrid, Spain; and
Facultad de Ciencias Matem\'aticas,
Universidad Complutense de Madrid, 28040 Madrid, Spain}

\email{vicente.munoz@imaff.cfmac.csic.es}


\subjclass[2000]{Primary 14D22, Secondary 14D20}

\thanks{Supported by grant MTM2007-63582 from the Spanish
Ministerio de Educaci\'on y Ciencia and grant 200650M066 from
Comunidad Aut\'onoma de Madrid}

\begin{abstract}
   Let $X$ be an irreducible smooth complex projective curve of genus $g>2$,
   and let $x\in X$ be a fixed point. A framed bundle is a pair
   $(E,\varphi)$, where $E$ is a vector bundle over $X$, of rank $r$ and degree $d$, and
   $\varphi:E_x\too \CC^r$ is a non--zero homomorphism.
   There is a notion of (semi)stability for framed bundles depending on a parameter
   $\tau>0$, which gives rise to the moduli space of $\tau$--semistable
framed bundles $\SM^\tau$.
We prove a Torelli theorem for $\SM^\tau$, for $\tau>0$ small enough,
meaning, the isomorphism class of the one--pointed
curve $(X\, ,x)$, and also the integer $r$,
are uniquely determined by the isomorphism class of
the variety $\SM^\tau$.
\end{abstract}

\maketitle

Let $X$ be an
irreducible smooth projective curve
defined over $\mathbb C$. Fix a point $x\,\in\, X$.
Fix a line bundle $\xi$ over $X$, and let $d$ denote its degree.
We consider pairs
of the form $(E,\varphi:E_x\too \CC^r)$, where $E$ is a vector
bundle of fixed rank $r$ and determinant $\xi$,
and $\varphi$ is a $\mathbb C$--linear
homomorphism ($E_x$ is the fiber of $E$ over the point $x$).
This is a particular case of the framed bundles of Huybrechts
and Lehn \cite{HL}. In our situation
the reference sheaf is the torsion sheaf supported at
$x$ with fiber $\CC^r$. In \cite{HL}, the
notion of a semistable framed bundle
is introduced which depends on a real parameter $\tau$,
and the corresponding moduli space is constructed, which is a
complex projective variety.

Let $\tau>0$ be a real number. A pair $(E,\varphi:E_x\too \CC^r)$ is
called $\tau$--\textit{stable} (respectively, $\tau$--\textit{semistable})
if, for all proper subbundles $E'\subset E$ of positive rank,
$$
\frac{\deg E' - \epsilon(E',\varphi)\tau}{\rk E'}
< \frac{\deg E - \tau}{\rk E}
$$
(respectively, $\frac{\deg E' - \epsilon(E',\varphi)\tau}{\rk E'}
\leq \frac{\deg E - \tau}{\rk E}$), where
$$
\epsilon(E',\varphi)=\left\{
\begin{array}{lcl}
1 & \text{if} & \varphi|_{E'_x}\neq 0, \\
0 & \text{if} & \varphi|_{E'_x}=0.
\end{array}
\right .
$$

Let $\SM^{\tau}_{X,x,r,\xi}$
be the moduli space of $\tau$--semistable pairs
of rank $r$
and determinant $\xi$. When the data is
clear from the context, we will also use the shortened
notation $\SM$ instead of $\SM^{\tau}_{X,x,r,\xi}$.

We prove the following Torelli theorem for this moduli space when
$\tau$ is sufficiently small.

\begin{theorem}
\label{thm:main}
Let $X$ be a smooth projective curve of genus $g>2$
and $x\in X$ a point.
Let $r>1$ be an integer and $\xi$ a line
bundle over $X$. Let
$\tau>0$ be a real number with $\tau<\tau(r)$
(cf. Lemma \ref{smalltau}). Let $X',g',x',r',\xi'$ and $\tau'$ be
another set of data satisfying the same conditions. If the
moduli space $\SM^\tau_{X,x,r,\xi}$ is
isomorphic to $\SM^{\tau'}_{X',x',r',\xi'}$,
then there is an isomorphism between $X$ and $X'$ sending
$x$ to $x'$, and also $r=r'$.
\end{theorem}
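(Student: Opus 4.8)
The plan is to recover the curve $X$ and the point $x$ from the intrinsic geometry of $\SM^{\tau}_{X,x,r,\xi}$ by exhibiting a distinguished subvariety whose structure encodes $(X,x)$. The guiding principle of Torelli-type arguments for bundle moduli is that one should identify inside $\SM$ a family of rational curves, or some other geometrically canonical locus, whose parameter space reproduces $X$. For small $\tau$, a $\tau$-(semi)stable framed bundle $(E,\varphi)$ is automatically a semistable bundle $E$ in the usual sense, so there is a forgetful rational map to the moduli space $\SM_{X,r,\xi}$ of semistable bundles; I would begin by understanding the fibers of this map. Over a stable bundle $E$, the fiber consists of nonzero homomorphisms $\varphi:E_x\to\CC^r$ up to the automorphisms of $E$ (which for a stable bundle are just scalars), so the fiber is essentially a projective space $\PP(\Hom(E_x,\CC^r))\cong\PP^{r^2-1}$. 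Thus $\SM^\tau$ is birational to a projective bundle over $\SM_{X,r,\xi}$, and the projective-space fibers give a canonical family of linear subvarieties.

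The next step is to make this family intrinsic. The key observation I would exploit is that the point $x$ enters only through the evaluation $E_x$, so the projective fibers $\PP(\Hom(E_x,\CC^r))$ carry a natural action coming from $\GL(E_x)$, and varying $E$ recovers the universal bundle restricted to $\{x\}\times\SM_{X,r,\xi}$. To locate these fibers intrinsically I would study the deformation theory: compute the tangent space $T_{(E,\varphi)}\SM^\tau$ via the appropriate hypercohomology of a two-term complex built from $\End(E)$ and the framing, and show that the fiber directions are distinguished among all tangent directions, for instance as the locus where certain natural subbundles of the tangent bundle degenerate, or as the unique family of projective subspaces of the correct dimension $r^2-1$ through a general point. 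Having isolated this family canonically, its base is $\SM_{X,r,\xi}$ (up to birational modification), and then I would invoke the classical Torelli theorem for moduli of semistable bundles of fixed determinant, which recovers $X$ from $\SM_{X,r,\xi}$, together with the extra framing data at $x$ to pin down the marked point $x$ and the rank $r$.

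The rank $r$ and the point $x$ require a separate argument beyond the classical Torelli theorem, and this is where I would be most careful. The integer $r$ should be readable from the dimension of the canonical projective fibers, namely $r^2-1$, or from the dimension of $\SM^\tau$ itself combined with the dimension $(r^2-1)(g-1)$ of $\SM_{X,r,\xi}$; comparing these numerical invariants across the two data sets forces $r=r'$. To recover $x$, I would analyze how the framing fibers sit over the locus of non-locally-free or strictly semistable points, or more robustly, I would use the universal bundle: the family of projective spaces $\PP(\Hom(E_x,\CC^r))$ is the projectivization of a specific vector bundle on $\SM_{X,r,\xi}$ determined by evaluation at $x$, and different points $x\neq x'$ yield non-isomorphic such bundles, hence non-isomorphic total spaces. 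Establishing this last non-isomorphism, that distinct marked points give non-isomorphic framed moduli spaces, is what I expect to be the main obstacle, since it requires controlling the fine structure of the universal bundle near $x$ rather than just the birational type of $\SM^\tau$.

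The hardest technical point will be making the identification of the distinguished family of projective subspaces genuinely canonical rather than dependent on the forgetful map, because the isomorphism $\SM^\tau_{X,x,r,\xi}\cong\SM^{\tau'}_{X',x',r',\xi'}$ is given abstractly and need not respect any a priori structure. I would address this by characterizing the fibers through deformation-theoretic or cohomological invariants that are preserved by any abstract isomorphism of varieties, so that the recovered data $(X,x,r)$ depends only on the isomorphism class of $\SM^\tau$ and not on the chosen presentation.
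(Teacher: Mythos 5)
Your outline matches the paper's architecture --- forgetful map to $\FM$, projective-bundle structure of the generic fiber, classical Torelli for $\FM$, and a final argument distinguishing the marked point --- but the two steps you yourself flag as the hard ones are exactly the ones the paper must (and does) supply concrete mechanisms for, and your proposal leaves both as gaps. First, making the fibration $f:\SM\to\FM$ intrinsic: you suggest characterizing the fibers as a ``unique family of projective subspaces'' or via degeneration of tangent subbundles, but you give no argument that such a family is unique, and it is not clear this can be pushed through. The paper instead computes $H^0(\SM,T_\SM)\cong\mathfrak{pgl}_r$ (via the relative Euler sequence, the vanishing $H^0(\FM^s,T_{\FM^s})=0$, the vanishing of sections of the adjoint bundle $\SA$ at $x$, and a codimension-two estimate for the non-stable locus requiring $g>2$). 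This simultaneously recovers $r$ and shows that the $\PGL_r(\mathbb C)$--action on $\SM$ is unique up to group automorphism; the forgetful map is then recovered canonically as the GIT quotient by this action, after checking (via the section $\det\wt\varphi$ and a one-parameter-subgroup computation) that the quotient is $\FM$ independently of the linearization and that the quotient map extends to $f$. Nothing in your proposal substitutes for this chain.

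Second, and more seriously, the recovery of $x$: you correctly identify that one must show distinct points $x\neq y$ (modulo $\operatorname{Aut}(X)$, a qualification you also need but do not address --- the induced automorphism of $\FM$ must be controlled using the Kouvidakis--Pantev description of $\operatorname{Aut}(\FM)$, since tensoring by line bundles and dualizing do not change the projective bundle) give non-isomorphic projective bundles $\PP(\Hom(\SE_x,\SO^r))$ over $\FM^s$, but you offer no method. The paper's device is to restrict the projective bundle to a Hecke cycle $\PP(F_y^\vee)\subset\FM$ built from a $(0,1)$-stable bundle $F$ of determinant $\xi\otimes\SO_X(y)$: the restriction is trivial when $x\neq y$ (the defining elementary modification is an isomorphism away from $y$) and non-trivial when $x=y$ (the restricted bundle has degree zero but contains the degree-one subbundle $\SO_{\PP(F_y^\vee)}(1)$, so it cannot be a twist of the trivial bundle). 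Without this or an equivalent construction, the marked point cannot be pinned down, so the proposal as written does not prove the theorem.
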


In other words, if we are given $\SM^{\tau}_{X,x,r,\xi}$ as an abstract variety,
we can recover the curve $X$, the point $x$, and the rank $r$.
We will first prove some facts about the geometry of this
moduli space $\SM\,:=\,
\SM^{\tau}_{X,x,r,\xi}$, and, using these, in the last section we prove
Theorem \ref{thm:main}.

\section{Forgetful morphism}
\label{sec:forget}

The following lemma relates $\tau$--semistability of a framed
bundle with the usual semistability of its underlying vector bundle.

\begin{lemma}
\label{smalltau}
There is a constant $\tau(r)$ that depending only on the rank
$r$ such that for all $\tau\, \in\, (0\, , \tau(r))$ the following
hold:
\begin{enumerate}
\item  $(E,\varphi)$ is $\tau$--semistable $\Rightarrow$ $E$
is semistable.
\item  $E$ is stable $\Rightarrow$ $(E,\varphi)$ is $\tau$--stable.
\item  Any $\tau$--semistable pair is $\tau$--stable.
\end{enumerate}
\end{lemma}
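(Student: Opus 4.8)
The plan is to clear denominators so that all three assertions reduce to a single integrality observation. Fix a proper subbundle $E'\subset E$ of rank $n'$ with $1\le n'\le r-1$ and degree $d'$, and write $n=\rk E=r$ and $d=\deg E$. Multiplying through by $n n'>0$, the defining inequality for $\tau$-stability becomes
$$
n\,d' - n'\,d \;<\; \tau\bigl(n\,\epsilon(E',\varphi)-n'\bigr),
$$
and $\tau$-semistability is the same statement with $\le$. The crucial point is that the left-hand side is an \emph{integer}, while the right-hand side is a small perturbation whose sign we can read off: since $\epsilon(E',\varphi)\in\{0,1\}$ and $1\le n'\le r-1$, the integer $n\epsilon-n'$ lies in $\{-(r-1),\dots,-1\}\cup\{1,\dots,r-1\}$, so it is never zero and has absolute value at most $r-1$. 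I would therefore take $\tau(r)=1/(r-1)$ (for $r\ge 2$), so that for every $\tau\in(0,\tau(r))$ the right-hand side lies in $(-1,0)\cup(0,1)$.

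Granting this normalization, each part follows at once. For (1), if $E$ were not semistable it would admit a destabilizing subbundle, which we may take saturated (for instance the maximal destabilizing subsheaf); it satisfies $nd'-n'd\ge 1$, whereas $\tau$-semistability gives $nd'-n'd\le\tau(n\epsilon-n')<1$, a contradiction. For (2), stability of $E$ yields $nd'-n'd\le -1$ for every proper subbundle, and since the right-hand side above is $>-1$ we obtain the strict inequality $nd'-n'd<\tau(n\epsilon-n')$, which is exactly $\tau$-stability. For (3), if a $\tau$-semistable pair were not $\tau$-stable, some proper subbundle would turn the inequality into an equality $nd'-n'd=\tau(n\epsilon-n')$; but the left-hand side is an integer and the right-hand side lies in $(-1,0)\cup(0,1)$, so such an equality is impossible.

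I do not anticipate a real obstacle: once the conditions are written in the integral form above, the entire lemma is the remark that a nonzero integer cannot lie in $(-1,1)$, combined with the fact that $n\epsilon-n'\ne 0$ excludes the value $0$ on the right. The only technical care needed is the passage to saturated subsheaves, so that the destabilizing object in the ordinary (semi)stability of $E$ is a genuine subbundle to which the $\tau$-condition applies; this is standard. Any constant $\tau(r)\le 1/(r-1)$ works, and one could equally well record the cleaner bound $\tau(r)=1/r$.
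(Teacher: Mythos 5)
Your proof is correct and follows essentially the same strategy as the paper's: both rest on the observation that the slope defect is quantized (you phrase it as the integer $rd'-r'd$, the paper as membership in $(1/r!)\ZZ$) while the $\tau$-perturbation is nonzero and strictly smaller than one quantum. Your version of clearing denominators yields the explicit and slightly sharper constant $\tau(r)=1/(r-1)$ in place of the paper's implicit bound derived from $1/r!$, but the logical skeleton of all three parts is identical.
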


\begin{proof}
The expression $|\epsilon/r'-1/r|$, where $\epsilon=0,1$
and $r'$ is an integer with $0<r'<r$, takes only a finite
number of values, so there is a largest positive number
$\tau(r)$ such that the following holds: if $0<\tau<\tau(r)$, then
\begin{equation}
\label{for1}
0<\tau\Big| \frac{\epsilon}{r'}-\frac{1}{r}\Big|<\frac{1}{r!}\; .
\end{equation}

Assume that $(E,\varphi)$ is $\tau$--semistable but $E$ is not
semistable
as a vector bundle. 
This means that there is a proper subbundle
$E'$ such that
$$
\frac{d}{r}<\frac{d'}{r'}\leq\frac{d}{r}
+\tau\Big(\frac{\epsilon(E,\varphi)}{r'}-\frac{1}{r}\Big) \ ,
$$
where $d=\deg E$ and $d'=\deg E'$.
But this is impossible because the slopes of all subbundles of $E$
are in $(1/r!)\ZZ$. Hence $d'/r'\geq d/r+1/r!$, which contradicts
(\ref{for1}).

Now assume $E$ is stable but $(E,\varphi)$ is not $\tau$--stable.
There is a proper subbundle $E'$ such that
$$
\frac{d}{r}>\frac{d'}{r'}\geq\frac{d}{r}
+\tau\Big(\frac{\epsilon(E',\varphi)}{r'}-\frac{1}{r}\Big)\ ,
$$
but this pair of inequalities again contradict (\ref{for1}).

Finally, if $(E,\varphi)$ is $\tau$--semistable, then for
all proper subbundles $E'$ of $E$,
$$
\frac{d'}{r'}\leq\frac{d}{r}
+\tau\Big(\frac{\epsilon(E',\varphi)}{r'}-\frac{1}{r}\Big) \ .
$$
We note that $\frac{d'}{r'}\not=\frac{d}{r}
+\tau\big(\frac{\epsilon(E',\varphi)}{r'}-\frac{1}{r}\big)$
because $d'/r'$ and
$d/r$ belong to $(1/r!)\ZZ$, and
$\tau\big(\frac{\epsilon(E',\varphi)}{r'}-\frac{1}{r}\big)$ does
not lie in $(1/r!)\ZZ$. Hence any $\tau$--semistable pair is
$\tau$--stable.
This completes the proof of the lemma.
\end{proof}

Henceforth, we will always assume that $0\, <\, \tau\,<\,\tau(r)$.

\medskip

Let $\FM$ denote the moduli space of semistable vector bundles
$E$ over $X$ of rank $r$ with $\bigwedge^rE \,=\, \xi$.
Since the underlying vector bundle of a $\tau$--semistable pair is
semistable, we have a forgetful morphism
\begin{equation}
  \label{eq:forget}
  f:\SM \too \FM
\end{equation}
that sends any framed bundle to the underlying vector bundle.

\begin{lemma}\label{lem:dim-moduli}
If $0<\tau<\tau(r)$, then the moduli space $\SM$ is smooth and
irreducible of dimension $(r^2-1)g$.
\end{lemma}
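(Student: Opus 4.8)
The plan is to combine deformation theory on the curve with the forgetful morphism $f\colon\SM\to\FM$ of \eqref{eq:forget}: smoothness and the dimension count come from an infinitesimal computation, and irreducibility is inherited from $\FM$. To set up the deformation theory of a framed bundle $(E,\varphi)$, write $\operatorname{End}(E)$ for the sheaf of endomorphisms and put $V=\CC^r$. Infinitesimal deformations and obstructions of the pair are governed by the hypercohomology of the two-term complex
$$
\mathcal{C}^\bullet\colon\quad \operatorname{End}(E)\xrightarrow{\ a\,\mapsto\,-\varphi\circ a_x\ }\Hom(E_x,V),
$$
with $\operatorname{End}(E)$ in degree $0$ and the skyscraper sheaf $\Hom(E_x,V)$, supported at $x$, in degree $1$. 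The tangent space is $\mathbb{H}^1(\mathcal{C}^\bullet)$ and the obstruction space is $\mathbb{H}^2(\mathcal{C}^\bullet)$; the fixed-determinant condition is imposed by cutting down with the trace map $\operatorname{End}(E)\to\SO_X$.

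For smoothness I would show $\mathbb{H}^2(\mathcal{C}^\bullet)=0$. In the hypercohomology long exact sequence of $\mathcal{C}^\bullet$, the group $\mathbb{H}^2$ is squeezed between $H^1$ of the skyscraper $\Hom(E_x,V)$, which vanishes since that sheaf has zero-dimensional support, and $H^2(X,\operatorname{End}(E))$, which vanishes because $X$ is a curve. Hence $\mathbb{H}^2(\mathcal{C}^\bullet)=0$ at every point. By Lemma~\ref{smalltau}(3) every point of $\SM$ is $\tau$-stable, so this deformation-theoretic argument applies uniformly, in particular at framed bundles whose underlying $E$ is only strictly semistable, where no fibration argument is available. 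Moreover $\mathbb{H}^0(\mathcal{C}^\bullet)=0$, since a $\tau$-stable framed bundle is simple (the nonzero framing $\varphi$ kills the scalars). Thus $\SM$ is smooth, of dimension $\dim\mathbb{H}^1=-\chi(\mathcal{C}^\bullet)$ at every point, hence of pure dimension.

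For the dimension I would compute the Euler characteristic. Riemann--Roch gives $\chi(\operatorname{End}(E))=-r^2(g-1)$ and $\chi(\Hom(E_x,V))=r^2$, so $\chi(\mathcal{C}^\bullet)=-r^2(g-1)-r^2=-r^2g$, yielding $\dim\mathbb{H}^1=r^2g$ for framings with varying determinant. Imposing the fixed determinant removes the $g$-dimensional space $H^1(X,\SO_X)$ of determinant deformations (the trace map surjects onto it), leaving $\dim\SM=(r^2-1)g$. This agrees with the geometric count through $f$: over the stable locus the fiber is the set of nonzero framings modulo $\operatorname{Aut}(E)=\CC^\ast$, namely $\PP(\Hom(E_x,V))\cong\PP^{r^2-1}$, so $\dim\SM=\dim\FM+(r^2-1)=(r^2-1)(g-1)+(r^2-1)=(r^2-1)g$.

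Finally, for irreducibility I would use $f$ together with the classical fact that the moduli space $\FM$ of semistable bundles with fixed determinant is irreducible, with stable locus $\FM^s$ open and dense. Over $\FM^s$ the morphism $f$ is a $\PP^{r^2-1}$-fibration, so $f^{-1}(\FM^s)$ is irreducible of dimension $(r^2-1)g$. It remains to rule out an extra component over the strictly semistable locus: since that locus has dimension strictly less than $\dim\FM=(r^2-1)(g-1)$ and the fibers of $f$ have dimension at most $r^2-1$, the set $f^{-1}(\FM\setminus\FM^s)$ has dimension $<(r^2-1)g$. As $\SM$ is smooth of pure dimension $(r^2-1)g$, a closed subset of strictly smaller dimension cannot exhaust a component, so $f^{-1}(\FM^s)$ is dense and $\SM$ is irreducible. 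The main obstacle is exactly this dimension control over the strictly semistable locus, together with the correct bookkeeping of the scalar automorphisms in the fixed-determinant deformation complex; it is also the reason smoothness must be argued infinitesimally (via $\mathbb{H}^2=0$) rather than purely through the fibration over $\FM^s$.
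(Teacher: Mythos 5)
Your strategy coincides with the paper's: smoothness is obtained from the vanishing of the degree--two hypercohomology of the two--term deformation complex $E^\vee\otimes E\too E^\vee_x\otimes\CC^r$ (your long exact sequence coming from the naive filtration of the complex is interchangeable with the paper's spectral sequence in the cohomology sheaves $\SH^j$; both reduce to $H^2=0$ on a curve and $H^1=0$ for a skyscraper), and the dimension comes from the $\PP^{r^2-1}$--fibration of $\SM^s$ over $\FM^s$, exactly as in the paper. Your Euler--characteristic cross--check and the observation that $\tau$--stable pairs have no infinitesimal automorphisms are consistent with this.

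There is, however, one step that fails as written, in the irreducibility argument. You bound $\dim f^{-1}(\FM\setminus\FM^s)$ by asserting that every fiber of $f$ has dimension at most $r^2-1$. That is true over $\FM^s$, where the fiber is $\PP(\Hom(E_x,\CC^r))$, but over a strictly semistable point $p$ the fiber $f^{-1}(p)$ consists of \emph{all} $\tau$--stable pairs $(E,\varphi)$ with $E$ lying in the S--equivalence class of $p$, and that class contains a positive--dimensional family of pairwise non--isomorphic bundles (e.g.\ for $\grq = Q_1\oplus Q_2$ with $Q_i$ stable of ranks $r_i$ and equal slope, the nontrivial extensions of $Q_2$ by $Q_1$ form a projective space of dimension $r_1r_2(g-1)+\dim\Hom(Q_2,Q_1)-1$). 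So the fibers of $f$ over the strictly semistable locus are in general of dimension larger than $r^2-1$, and your inequality $\dim f^{-1}(\FM\setminus\FM^s)<(r^2-1)g$ does not follow from the stated fiber bound. The conclusion is nevertheless correct: the required dimension control is exactly the content of Lemma \ref{lem:codim} (which shows that the locus of pairs with non--stable underlying bundle has codimension at least two when $g>2$), and one could alternatively argue via irreducibility of the parameter scheme used to construct $\SM$. The paper is itself terse at this point, deducing irreducibility from smoothness of $\SM$ together with the fibration over the irreducible dense open set $\FM^s$; your write--up should invoke (or reprove) the estimate of Lemma \ref{lem:codim} in place of the incorrect uniform fiber bound.
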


\begin{proof}
By Lemma \ref{smalltau} (1), there are no strictly semistable pairs.
Hence using \cite[Theorem 4.1]{HL} we conclude that $\SM$ is smooth if
$$
\operatorname{\mathbb{E}xt}^2(E,E\too \CC^r)=0
$$
for all pairs $(E\, ,\varphi)$.
Here we are considering $E$ and $\alpha:E\too \CC^r$
as complexes concentrated in dimensions $0$ and $(0,1)$. The
homomorphism
$\alpha$ is the composition of the restriction to the fiber over
$x$ followed by $\varphi$.
Since $E$ is locally free, the hyper--Ext group is isomorphic to
the hypercohomology
\begin{equation}\label{eq:hyper}
\mathbb{H}^2(E^\vee\otimes E \too E^\vee_x\otimes\CC^r) \, .
\end{equation}
There is a spectral sequence
$$
H^i(\SH^j) \Rightarrow \mathbb{H}^{i+j}(E^\vee\otimes E \too
E^\vee_x\otimes\CC^r)\, ,
$$
where $\SH^j$ is the cohomology of the complex.
Note that $H^2(\SH^0)=0$ because $\dim X=1$, and
$H^1(\SH^1)=0$ because $\SH^1$ is supported at $x$.
Furthermore,
we have $H^0(\SH^2)=0$ because $\SH^2=0$. Therefore, the
hypercohomology group in (\ref{eq:hyper}) is zero
for any pair $(E,\varphi)$. This proves that $\SM$ is smooth.

By Lemma \ref{smalltau} (2), the forgetful morphism
$f$ in (\ref{eq:forget}) is dominant, and furthermore,
it is a projective bundle over the dense open set
$\FM^s$ that parametrizes the stable vector bundles.
Therefore, from the facts
that $\FM$ is irreducible projective and $\SM$ is smooth
projective it follows that $\SM$ is irreducible. Also,
the dimension of $\SM$ is
the sum of $\dim \FM=(r^2-1)(g-1)$ and
the dimension $r^2-1$
of the generic fiber of the forgetful morphism. This completes
the proof of the lemma.
\end{proof}

\section{The $\PGL_r({\mathbb C})$ action on $\SM$}
\label{sec:pglaction}

As before,
let $\FM^s$ be the Zariski open subset of $\FM$ that parameterizes
the stable bundles. The Zariski open subset of $\SM$ that parametrizes
all pairs such that the underlying vector bundle is stable will
be denoted by ${\SM^s}$ (the openness of ${\SM^s}$ follows from
\cite[p. 635, Theorem 2.8(B)]{Ma}). Consider the Cartesian diagram
\begin{equation}\label{de.fs}
\xymatrix{
{\SM^s} \ar[r] \ar[d]_{f_s} & {\SM} \ar[d]^{f}\\
{\FM^s} \ar[r]  & {\FM .}  \\
}
\end{equation}
The morphism $f_s$ defines a projective bundle whose
fiber over any $E$ is the projective space
$\PP(\Hom(E_x,\CC^r))$ that parametrizes all lines in
$\Hom(E_x,\CC^r)\,=\, (E^\vee_x)^{\oplus r}$,
where $E_x$ is the fiber of $E$ over $x$.

If $\FM^s$ admits a universal vector bundle $\SE$, then,
denoting by $\SE_x$ the restriction of $\SE$ to the
slice $\{x\}\times\FM^s$, we have a natural isomorphism
$$
\SM^s \cong \PP(Hom(\SE_x^{},\SO_{\FM^s}^r))\, .
$$
Since automorphisms of a stable vector bundle $E$ act
trivially on $\PP(\Hom(E_x\, ,{\mathbb C}^r))$, even if $\SE_x$ does not
exist, the projective universal
bundle $\PP(Hom(\SE_x^{},\SO_{\FM^s}^r))$
does exist on $\FM^s$.

We will now estimate the codimension
of the complement of $\SM^s$ in $\SM$.

\begin{lemma}\label{lem:codim}
Assume that $g(X)\,>\, 2$. Then the codimension in $\SM$ of
the closed subset $Z$ of pairs $(E,\varphi)$
with $E$ not stable is at least two.
\end{lemma}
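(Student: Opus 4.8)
The plan is to realize $Z$ as the preimage $f^{-1}(\FM\setminus\FM^s)$ of the strictly semistable locus under the forgetful morphism $f$ of (\ref{eq:forget}), and then to bound its dimension by combining a fiber-dimension estimate with a codimension estimate for the strictly semistable locus inside $\FM$. By Lemma \ref{smalltau}(1) the underlying bundle of every pair in $\SM$ is semistable, so $Z$ is exactly $f^{-1}(\FM\setminus\FM^s)$.

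First I would bound the fibers of $f$. For any $E\in\FM$ the fiber $f^{-1}(E)$ is a locally closed subset of $\PP(\Hom(E_x,\CC^r))$ — namely the lines $[\varphi]$ for which $(E,\varphi)$ is $\tau$-stable, taken up to the action of $\operatorname{Aut}(E)$ — and hence has dimension at most $r^2-1$. Combined with $\dim\SM=(r^2-1)g$ and $\dim\FM=(r^2-1)(g-1)$, this gives
$$
\dim Z \,\le\, \dim(\FM\setminus\FM^s)+(r^2-1),
$$
so that $\codim_\SM Z\ge\codim_\FM(\FM\setminus\FM^s)$. It therefore suffices to bound the latter codimension.

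For the strictly semistable locus I would stratify by the rank $s$ of a destabilizing subbundle of equal slope. Every strictly semistable $E$ fits into an exact sequence $0\too F\too E\too Q\too 0$ with $F,Q$ semistable of slope $d/r$ and $\rk F=s$, $1\le s\le r-1$, and on the dense part of each stratum one may take $F,Q$ stable. Counting parameters — stable $F$ of rank $s$ (dimension $s^2(g-1)+1$), stable $Q$ of rank $r-s$ with $\det Q=\xi\otimes(\det F)^{-1}$ (dimension $((r-s)^2-1)(g-1)$), and the extension class in $\PP(\Ext^1(Q,F))$, where Riemann--Roch together with $\Hom(Q,F)=0$ yields $\dim\Ext^1(Q,F)=s(r-s)(g-1)$ — and noting that this family dominates the stratum (so its image has no larger dimension), I would obtain that each stratum has codimension at least $(g-1)\,s(r-s)$ in $\FM$.

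The main point is then purely arithmetic: $s(r-s)\ge r-1\ge 1$ for $1\le s\le r-1$, so the strictly semistable locus has codimension at least $(g-1)(r-1)$ in $\FM$, and since $g>2$ and $r>1$ this is at least $2$. Together with the first reduction this gives $\codim_\SM Z\ge(g-1)(r-1)\ge 2$, as claimed. I expect the parameter count to be the only delicate step, and it is exactly there that the hypothesis $g>2$ is needed: for $g=2,\ r=2$ the bound degenerates to codimension $1$.
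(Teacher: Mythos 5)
There is a genuine gap in the first reduction, and it is not repairable as stated. You claim that for any $E\in\FM$ the fiber $f^{-1}(E)$ sits inside $\PP(\Hom(E_x,\CC^r))$ and hence has dimension at most $r^2-1$. This is true over $\FM^s$, but false over $\FM\setminus\FM^s$: a point of $\FM$ there is an $S$-equivalence class, and $f^{-1}([E_p])$ consists of all $\tau$-stable pairs $(E',\varphi)$ with $E'$ \emph{any} semistable bundle $S$-equivalent to the polystable bundle $E_p$ (this is stated explicitly in the proof of Proposition \ref{prop:git}). The isomorphism classes of such $E'$ already sweep out $\PP(\Ext^1(Q,F))\sqcup\PP(\Ext^1(F,Q))\sqcup\{E_p\}$, so the fiber has dimension roughly $(r^2-1)+s(r-s)(g-1)-1$, not $r^2-1$. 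Consequently your intermediate inequality $\codim_\SM Z\ge\codim_\FM(\FM\setminus\FM^s)$ is simply false. Concretely, for $r=2$, $g=3$: the polystable locus $\{L_1\oplus L_2\}$ has dimension $g=3$ in $\FM$ of dimension $3(g-1)=6$, so $\codim_\FM(\FM\setminus\FM^s)=3$; but $Z$ contains the pairs $(E,\varphi)$ with $E$ a non-split extension of $L_2$ by $L_1$, giving $\dim Z\ge g+(g-2)+(r^2-1)=7$ inside $\dim\SM=9$, i.e.\ $\codim_\SM Z=2<3$. The lemma's conclusion survives, but not by your route.

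What saves your final numbers is an accidental cancellation: the extension classes $\PP(\Ext^1(Q,F))$ that you (incorrectly) charged to the base $\FM\setminus\FM^s$ — all such extensions map to the single point $[F\oplus Q]$, so they do not enlarge the polystable locus — are exactly the extra directions hiding in the fibers of $f$. The correct organization, which is what the paper does, is to bound the dimension of the set of \emph{isomorphism classes} of strictly semistable bundles (including all the iterated extension data of the Jordan--H\"older filtration) and then add the uniform $r^2-1$ for the framing; this yields $\codim_\SM Z\ge s(r-s)(g-1)$, matching your arithmetic. Two further points your count skips, and which the paper's more elaborate induction over graduations $\bigoplus_j Q_j^{a_j}$ is designed to handle: (i) bundles whose Jordan--H\"older filtration has length $\ge 3$ cannot be written as an extension of a stable bundle by a stable bundle, so they are not covered by your strata and must be bounded separately (they turn out to contribute less, but this requires the computation); and (ii) when $F\cong Q$ one has $\Hom(Q,F)\ne 0$, so $\dim\Ext^1(Q,F)=s(r-s)(g-1)+1$, and it is precisely this self-extension stratum (with $m_1=1$) that the paper identifies as the case where the hypothesis $g>2$ is actually needed — not the case $g=2,r=2$ of your formula $(g-1)(r-1)$.
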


\begin{proof}
Clearly, this codimension coincides with the codimension of the
closed subset of strictly semistable vector bundles inside the
moduli space of semistable vector bundles $E$ of rank $r$
and degree $\deg E\, =d$; 
the dimension
of this moduli space is $r^2(g-1)+1$.
So we will calculate the later, and we do this by
refining the computations carried out in Proposition 7.9 of
\cite{BGMMN}.

For a semistable bundle $E$, we
have the Jordan--H\"older filtration given by
 \begin{equation}\label{j-h-filtration}
 0=E_0\subset E_1\subset E_2 \subset \ldots \subset E_l=E\, ,
 \end{equation}
with $Q_i=E_i/E_{i-1}$ stable and
$\mu(Q_i)=\mu(E)$ for $1\leq i \leq l$. The
direct sum
$$\grq(E)\,:=\,\bigoplus_{i=1}^l Q_i$$ is called the {\em graded
vector bundle} associated
to $E$. Combining the isomorphic direct summands we have
$$
\grq(E)\,=\,\bigoplus_{j=1}^k  Q_j^{a_j}\, ,
$$
where $Q_j\not\cong Q_{j'}$ for $j\neq j'$. 

Fix $k\geq 1$, and fix $a_1,\ldots, a_k>0$ with $l=\sum a_j>1$.
Also, for each $j$ with $1\leq j \leq k$,
fix $d_j,m_j$ with $d_j/m_j=\mu(E)$. Consider the space
 $$
\big\{(Q_1,\ldots, Q_k) \in M(m_1,d_1)\times \cdots \times M(m_k,d_k)
\ | \
Q_j\not\cong Q_{j'},
 \ \text{ for } j\neq j' \big\}\, ,
$$
where $M(m_j,d_j)$ is the moduli space of stable vector bundles
over $X$ of rank $m_j$ and degree $d_j$.
Now fix some map 
 $$
 \varpi:\{1,\ldots, l\} \too \{1,\ldots, k\}
 $$
such that $\#\varpi^{-1}(j)=a_j$, for all $1\leq j\leq k$.
This determines the order in which the elements $Q_j$ appear in
the Jordan--H\"older filtration. Then the set of
isomorphism classes of semistable vector bundles
of rank $m$ and degree $d$, whose Jordan--H\"older
filtration (\ref{j-h-filtration}) has
graduation $Q=\bigoplus_{j=1}^k Q_j^{a_j}$,
is covered by the sets $\cS_{\varpi}$ whose elements are the
equivalence classes of extensions of the form
 $$
 0\too E_{i-1}\too E_i\too Q_{\varpi(i)} \too 0,
 $$
where $2\leq i\leq l$, and $E_1=Q_{\varpi(1)}$. Then
  $$
 \dim \cS_{\varpi}\, \leq\,  \sum_{j=1}^k \dim M(m_j,d_j) +
\sum_{i=2}^l (\dim \Ext^1(Q_{\varpi(i)},E_{i-1})-1) \,.
  $$

First recall that $\dim M(m_j,d_j)= m_j^2(g-1) +1$. Second, note that
 $$
 \dim \Ext^1(Q_{\varpi(i)},E_{i-1}) )=-\chi \big(Hom(Q_{\varpi(i)},
E_{i-1})\big)+ \dim \Hom(Q_{\varpi(i)},E_{i-1})\, .
 $$
By Riemann--Roch, and the condition that $\mu(Q_j)=\mu(Q_{j'})$,
we have
\begin{eqnarray*}
&&   -\sum_{i=2}^l \chi \big(Hom(Q_{\varpi(i)},E_{i-1})\big) =
 \sum_i \rk(Q_{\varpi(i)}) \rk(E_{i-1}) (g-1)= \\
&& \hspace{0.5cm} =\frac{r^2-\sum_{i=1}^l \rk(Q_{\varpi(i)})^2}{2} (g-1)=
\frac{r^2-\sum_{j=1}^k a_j m_j^2}{2} (g-1).
\end{eqnarray*}
Also, $\dim\Hom(Q_{\varpi(i)},E_{i-1})$ is at most
the number of times the vector bundle $Q_{\varpi(i)}$ appears in
$\grq(E_{i-1})$. Therefore, a straight--forward induction shows that
$$
\sum_{i=2}^l \dim \Hom(Q_{\varpi(i)},E_{i-1})
\leq \sum_j (1+\ldots +(a_j-1)) =\sum_j \frac{(a_j-1)a_j}{2} \, .
$$
Putting all together,
\begin{eqnarray*}
&&   \dim \cS_{\varpi}\leq  \\
&& \leq
 \sum_j (m_j^2(g-1) +1) + \frac{r^2-\sum_j a_j m_j^2}{2} (g-1) - \sum_j a_j
 +1 +\sum_j \frac{(a_j-1)a_j}{2} \, .
\end{eqnarray*}
Therefore, the codimension is at least
\begin{eqnarray*}
&&    \frac{r^2+\sum_j a_j m_j^2}{2} (g-1)  - \sum_j (m_j^2(g-1) +1) + \sum_j a_j  -\sum_j \frac{(a_j-1)a_j}{2}  =\\
&& \hspace{0.5cm} = \sum_j \left( \frac{a_j^2+ a_j -2}{2} m_j^2 (g-1) - \frac{(a_j-2)(a_j-1)}{2}\right)  + \sum_{j<j'}a_ja_{j'}m_jm_{j'}(g-1)\, .
\end{eqnarray*}
That is, this codimension is bigger than or equal to
\begin{eqnarray*}
&&  \sum_{j=1}^k \frac{a_j-1}{2} ((a_j +2) m_j^2 (g-1) - (a_j-2))   +
\sum_{j<j'}a_ja_{j'}m_jm_{j'}(g-1)  \geq\\
&&\hspace{0.5cm} \geq
  \sum_{j<j'}a_ja_{j'}m_jm_{j'}(g-1)  \ ,\\
\end{eqnarray*}
with strict inequality if not all $a_j=1$. If all $a_j=1$,
then $k>1$ and this number is at least $(r-1)(g-1)$.
If some $a_j>1$, then the worst case is when $k=1$,
in which case $a_j=l\geq 2$ and the above number is
 $$
   \frac{l-1}{2} ((l +2) m_1^2 (g-1) - (l-2))   \geq 2m_1^2 (g-1) \geq 2\ ,
 $$
unless $m_1=1$ and $g=2$. But this case is ruled out by our assumptions.
This completes the proof of the lemma.
\end{proof}

There is an action of $\PGL_r(\mathbb C)$ on the moduli space $\SM$
which is constructed as
follows. Let $[G]\in \PGL_r(\mathbb C)$, and let $G\in \GL_r(\mathbb C)$
be an element that projects to $[G]$. A point in $\SM$ corresponding
to $(E,\varphi:E_x\too \CC^r)$ is
sent to $(E\, ,G\circ \varphi)$.
This is well-defined
because $\epsilon(E'\, ,\varphi)=\epsilon(E'\, ,G\circ \varphi)$ and hence
the $\tau$--semistability is preserved.

Let
$$
T_f\, \subset\, T_{\SM}
$$
be the vertical tangent sheaf for the projection $f$
in (\ref{eq:forget}). In other words, $T_f$ is the kernel of
the differential $df:T_{\SM}\too f^*T_{\FM}$. The map $f$
commutes with the above action
of $\PGL_r(\mathbb C)$ on the moduli space $\SM$.
Hence we get a homomorphism of Lie algebras
\begin{equation}\label{a}
a:\mathfrak{pgl}_r \too 
H^0(\SM,T_f)\, .
\end{equation}

\begin{lemma}
\label{lem:dimtf}
The homomorphism $a$ in (\ref{a}) is an isomorphism.
\end{lemma}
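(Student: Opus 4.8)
```latex
The plan is to show that $a$ is both injective and surjective by
understanding the vertical tangent sheaf $T_f$ explicitly. Recall from
the Cartesian diagram (\ref{de.fs}) that over the dense open set
$\SM^s$ the forgetful map $f_s$ is a projective bundle with fiber
$\PP(\Hom(E_x,\CC^r))$ over the point $[E]\in\FM^s$. Hence the
restriction of $T_f$ to $\SM^s$ is the relative tangent sheaf of this
projective bundle. The first step is therefore to identify
$a(\mathfrak{pgl}_r)$ as the image of the natural infinitesimal
$\PGL_r(\CC)$--action on the fibers $\PP(\Hom(E_x,\CC^r))$, and to
compute $H^0(\SM^s,T_{f_s})$ fiberwise.

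\medskip

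The key computation is the identification of the vertical cohomology.
Over a point $[E]\in\FM^s$ the fiber is
$\PP(\Hom(E_x,\CC^r))=\PP((E_x^\vee)^{\oplus r})$, a projective space of
dimension $r^2-1$. The relative tangent bundle of a projective
bundle $\PP(W)$ has global sections along the fiber equal to
$\mathfrak{sl}(W)$ (the trace--free endomorphisms), via the Euler
sequence $0\too \SO \too W^\vee\otimes\SO(1)\too T_{\PP(W)}\too 0$,
which after taking global sections gives $H^0(\PP(W),T_{\PP(W)})=
\mathfrak{pgl}(W)=\mathfrak{sl}(W)$. Here $W=(E_x^\vee)^{\oplus r}=
E_x^\vee\otimes\CC^r$, so $\mathfrak{sl}(W)$ decomposes under the two
tensor factors. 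The $\PGL_r(\CC)$--action comes precisely from the
$\CC^r$ factor, i.e. from $\mathfrak{sl}_r\subset\mathfrak{sl}(W)$
acting by $G\circ\varphi$ as in the construction of the action. The
second step is to push this fiberwise description through
$f_s$: since the automorphisms of a stable $E$ act trivially on the
fiber (as noted in the text), the relevant summand of the
pushforward $(f_s)_*T_{f_s}$ that is constant along $\FM^s$ is exactly
the copy of $\mathfrak{pgl}_r$ coming from the $\CC^r$ factor, while the
$E_x^\vee$--factor contributions vary with $E$ and contribute no global
sections over the positive--dimensional base $\FM^s$ (because a
globally constant trivialization of the $E_x$--direction would give a
nowhere--vanishing section of a bundle built from the nontrivial family
$\SE_x$, which does not exist). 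This shows
$H^0(\SM^s,T_{f_s})\cong\mathfrak{pgl}_r$ and that $a$ is an
isomorphism onto global sections over $\SM^s$.

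\medskip

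The final step is to pass from $\SM^s$ to all of $\SM$ by an extension
argument. By Lemma \ref{lem:codim} the complement $Z=\SM\setminus\SM^s$
has codimension at least two in $\SM$, and by Lemma
\ref{lem:dim-moduli} the variety $\SM$ is smooth. A reflexive sheaf on
a smooth variety has the property that its sections extend across any
closed subset of codimension at least two (by Hartogs); since $T_f$ is
a subsheaf of the locally free $T_{\SM}$ and is saturated, it is
reflexive, so restriction gives an isomorphism
$H^0(\SM,T_f)\cong H^0(\SM^s,T_f)=H^0(\SM^s,T_{f_s})$. Combining with
the previous step yields $H^0(\SM,T_f)\cong\mathfrak{pgl}_r$, and since
$a$ is compatible with these restrictions, $a$ is an isomorphism.

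\medskip

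I expect the main obstacle to be the second step: establishing that the
only fiberwise sections of $T_{f_s}$ which glue to global sections over
$\FM^s$ are those coming from the $\mathfrak{pgl}_r$--factor, i.e.
ruling out extra global sections built from the $E_x^\vee$--direction.
This requires controlling the pushforward $(f_s)_*T_{f_s}$ as a sheaf
on $\FM^s$ and using nonexistence/nontriviality of the family $\SE_x$
to kill the unwanted summand; the codimension--two extension in the
third step is then routine once the smooth open part is understood.
```
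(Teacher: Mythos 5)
Your overall architecture is the same as the paper's: restrict to $\SM^s$, use the relative Euler sequence of the projective bundle $f_s$ to compute $(f_s)_*T_{f_s}$, decompose the middle term according to the two tensor factors of $\Hom(E_x,\CC^r)=E_x^\vee\otimes\CC^r$, show that only the $\mathfrak{gl}_r$--factor contributes global sections, and then pass from $\SM^s$ to $\SM$ using the codimension bound of Lemma \ref{lem:codim}. However, there is a genuine gap at exactly the step you flag as the main obstacle, and the justification you sketch for it does not work. The ``$E_x^\vee$--direction'' summand of $(f_s)_*T_{f_s}$ is, after splitting off the trace, $\SA\otimes\mathfrak{gl}_r$, where $\SA$ is the adjoint bundle on $\FM^s$ of the universal projective bundle at $x$. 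This $\SA$ is an honest vector bundle on $\FM^s$ that exists globally whether or not a universal vector bundle $\SE_x$ exists, so the Brauer--type nonexistence of $\SE_x$ cannot be the reason it has no sections; ``the family $\SE_x$ is nontrivial'' is not an argument that $H^0(\FM^s,\SA)=0$. The paper supplies the missing input: the universal projective bundle $P_x$ is \emph{stable} as a $\PGL_r(\CC)$--bundle, and a stable principal bundle has no nonzero sections of its adjoint bundle (citing \cite{BBN,BG}). Without this (or an equivalent simplicity statement for the Poincar\'e family), your second step does not close, and with it your computation $H^0(\SM^s,T_{f_s})\cong\mathfrak{gl}_r/\CC\cong\mathfrak{pgl}_r$ goes through. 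A second, smaller issue is that the Euler-sequence computation must be performed on \'etale covers of $\FM^s$ where $\SE$ exists and then descended (the paper checks the sequence is independent of the choice of $\SE$); you cannot literally write the Euler sequence globally on $\FM^s$ in terms of $\SE_x$.

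Your third step is also heavier than necessary and carries its own burden: to apply Hartogs you must verify that $T_f$ is reflexive (it is the kernel of $T_{\SM}\to f^*T_{\FM}$, and $f^*T_{\FM}$ need not be torsion--free over the non-stable locus, so saturatedness is not automatic). The paper sidesteps this entirely: it proves injectivity of $a$ directly from the free $\PGL_r(\CC)$--action on the locus $\SM^0$ where $\varphi$ is an isomorphism, notes that restriction $H^0(\SM,T_f)\to H^0(\SM^s,T_{f_s})$ is injective merely because $\SM^s$ is dense, and concludes by comparing dimensions: $\mathfrak{pgl}_r\hookrightarrow H^0(\SM,T_f)\hookrightarrow H^0(\SM^s,T_{f_s})$ with equal outer dimensions forces both maps to be isomorphisms. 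You may want to adopt that dimension count in place of the extension argument.
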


\begin{proof}
Let $\SM^0\subset \SM$ be the open subset parametrizing
all pairs $(E,\varphi)$ such that $E$ is a stable vector
bundle and $\varphi$ is an isomorphism. The action of
$\PGL_r(\mathbb C)$ on $\SM^0$ is evidently free. Hence
$\SM^0$ has the structure of a principal
$\PGL_r(\mathbb C)$--bundle over $\FM^s$.

Therefore, the composition
$$
  \mathfrak{pgl}_r \stackrel{a}{\too} H^0(\SM,T_f)
\too H^0(\SM^0,T_f|_{\SM^0})
$$
is injective. Consequently, the homomorphism
$a$ in (\ref{a}) is injective.

The variety $\FM^s$ admits a covering by Zariski
open subsets $\{U^i_0\}_{i\in I}$ such that for each
$i$ there is an \'etale Galois covering
$$
U^i\, \longrightarrow\, U^i_0
$$
with the property that there is a universal
vector bundle over $U^i\times X$. Note that
any two universal vector bundles over $U^i\times X$
differ by tensoring with a line bundle pulled back from
$U^i$. Let
\begin{equation}\label{U0}
U\,\too\, U_0\, \subset \,\FM^s
\end{equation}
be an \'etale cover
of a nonempty Zariski open subset of $\FM^s$
which admits a universal vector bundle $\SE \too U\times X$.
Therefore, there is a Cartesian diagram
$$
\xymatrix{
{\PP(Hom(\SE_x^{},\SO_{U}^r))} \ar[r] \ar[d]_{f_U} & {\SM^s}
\ar[d]^{f_s}\\
{U} \ar[r] & {\FM^s,}
}
$$
where the map $f_s$ is defined in (\ref{de.fs})
(recall that $\SM^s=f^{-1}(\FM^s)$).
The relative Euler sequence for $f_U$ is
$$
0 \too \SO_{\PP}\too \SO_{\PP}(1)
\otimes {f_U}^* (\SE^{\vee}_x\otimes\SO_{U}^r)
\too T_{f_U} \too 0 \; ,
$$
where $\PP=\PP(Hom(\SE_x^{},\SO_{U}^r))$, and
$T_{f_U}\,\longrightarrow\,
\PP(Hom(\SE_x^{},\SO_{U}^r))$ is the relative tangent
bundle for above the projection $f_U$.
Applying the functor ${f_U}_*$, we get
\begin{equation}\label{s.e.cu}
0 \too \SO_{U}\too
\SE_x\otimes\SO_{U}^{r\vee}
\otimes
\SE_x^\vee\otimes\SO_{U}^r
\too {f_U}_*T_{f_U} \too 0 \; .
\end{equation}

Since any two universal vector bundles over
$U\times X$ differ by tensoring with a line
bundle pulled back from $U$, the short exact
sequence in (\ref{s.e.cu}) is canonical in the
sense that it is independent of the choice
of $\SE$. Therefore, the exact sequence in
(\ref{s.e.cu}) descends to $U_0$ via the map
(\ref{U0}).
Furthermore, these locally defined (in \'etale
topology) short exact sequences patch together
to give a short exact sequence of vector
bundles on $\FM^s$
\begin{equation}
  \label{eq:s1}
  0 \too \SO_{\FM^s}\too
(\SA\oplus \SO_{\FM^s})\otimes\SO_{\FM^s}^{r\vee}
\otimes\SO_{\FM^s}^r
\too {f_s}_*T_{f_s} \too 0 \; ,
\end{equation}
where $\SA$ is the restriction to $\FM^s\times
\{x\}$ of the unique universal adjoint vector bundle
over $\FM^s\times X$, and $T_{f_s}\,\longrightarrow\,
 {\SM^s}$ is the relative tangent bundle for the projection
$f_s$ in (\ref{de.fs}).

Let $P_x\, \longrightarrow\, \FM^s$ be the projective bundle
obtained by restricting to $\FM^s\times\{x\}$ the unique
universal projective bundle over $\FM^s\times X$. The
adjoint bundle associated to $P_x$ 
coincides with the vector bundle $\SA$. Since
the $\text{PGL}_r(\mathbb C)$--bundle defined by $P_x$ is stable,
the vector bundle $\SA$ does not have any
nonzero global sections (cf. \cite{BBN,BG}).
Therefore, taking global sections in (\ref{eq:s1}), we have
the short exact sequence
$$
0 \too \CC \too \mathfrak{gl}_r
\too
H^0(\SM^s,T_{f_s})\too 0 \; .
$$
It follows that $h^0(\SM^s,T_{f_s})
=r^2-1$. From Lemma \ref{lem:codim} we know
that $\SM^s$ is dense in $\SM$.
Hence the restriction homomorphism $H^0(\SM,T_{f})\too
H^0(\SM^s,T_{f_s}) $
is injective. Therefore, we have injective homomorphisms
$$
\mathfrak{pgl}_r \stackrel{a}\inj
H^0(\SM,T_{f})\stackrel{b}\inj
H^0(\SM^s,T_{f_s})\, ,
$$
and the dimensions of the first and last spaces coincide.
Consequently, both $a$ and $b$ are isomorphisms.
\end{proof}

\begin{lemma}
\label{lem:tft}
  There is an isomorphism
$$
H^0(\SM,T_f) = H^0(\SM,T_{\SM})\, .
$$
\end{lemma}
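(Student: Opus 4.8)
The plan is to show that every global vector field on $\SM$ is vertical for the forgetful morphism $f$. Since $T_f=\ker(df\colon T_{\SM}\to f^*T_{\FM})$ by definition, taking global sections in the left-exact sequence $0\to T_f\to T_{\SM}\xrightarrow{df} f^*T_{\FM}$ gives $H^0(\SM,T_f)=\ker\bigl(H^0(df)\colon H^0(\SM,T_{\SM})\to H^0(\SM,f^*T_{\FM})\bigr)$. As the inclusion $H^0(\SM,T_f)\subset H^0(\SM,T_{\SM})$ is automatic, it suffices to prove the reverse inclusion, that is, that the map $H^0(df)$ vanishes.

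First I would reduce to the stable locus. By Lemma \ref{lem:codim} the complement $\SM\setminus\SM^s$ has codimension at least two, and $\SM$ is smooth, hence normal, by Lemma \ref{lem:dim-moduli}; so a global vector field and the question of its verticality are both determined by the restriction to $\SM^s$, and it is enough to show that each $v\in H^0(\SM,T_{\SM})$ restricts to an $f_s$-vertical field on $\SM^s$. On the stable locus, $f_s\colon\SM^s\to\FM^s$ is a projective bundle (diagram (\ref{de.fs})), so $f_{s*}\SO_{\SM^s}=\SO_{\FM^s}$, and the projection formula yields a canonical isomorphism $H^0(\SM^s,f_s^*T_{\FM^s})\xrightarrow{\sim}H^0(\FM^s,T_{\FM^s})$, under which $df_s(v|_{\SM^s})$ corresponds to the vector field $f_{s*}\,df_s(v|_{\SM^s})$ obtained by pushing $v$ down to $\FM^s$. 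Consequently $df_s(v|_{\SM^s})=0$ as soon as $H^0(\FM^s,T_{\FM^s})=0$, and this is precisely the assertion that $v|_{\SM^s}$ is vertical.

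The crux is therefore the vanishing $H^0(\FM^s,T_{\FM^s})=0$, i.e.\ that the moduli space of stable bundles of fixed determinant carries no nonzero global vector field. To approach it I would use the Kodaira--Spencer identification $T_{\FM^s}\cong R^1\pi_*\SA$, where $\pi\colon\FM^s\times X\to\FM^s$ is the projection and $\SA$ denotes the universal adjoint (trace-free endomorphism) bundle on $\FM^s\times X$. Since $H^0(X,\End_0 E)=0$ for every stable bundle $E$, we have $\pi_*\SA=0$, and the Leray spectral sequence then gives $H^0(\FM^s,T_{\FM^s})\cong H^1(\FM^s\times X,\SA)$. The remaining vanishing of $H^1(\FM^s\times X,\SA)$ is the main obstacle: I would extract it by analyzing $\SA$ along the projection to $X$, using in particular that $\SA$ restricts on each slice $\FM^s\times\{y\}$ to the adjoint bundle of a stable $\PGL_r(\CC)$-bundle, which has no nonzero global sections (as already used in the proof of Lemma \ref{lem:dimtf}); alternatively one may simply invoke the known discreteness of the automorphism group of $\FM$, which gives $H^0(\FM,T_{\FM})=0$ directly. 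This is where the genuine input about moduli of bundles enters, and once it is secured the reduction of the first two paragraphs completes the proof.
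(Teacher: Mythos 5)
Your proposal is correct and follows essentially the same route as the paper: restrict to $\SM^s$ using the codimension-two bound of Lemma \ref{lem:codim}, identify $H^0(\SM^s,f_s^*T_{\FM^s})$ with $H^0(\FM^s,T_{\FM^s})$ via the projective bundle $f_s$, and conclude from the vanishing $H^0(\FM^s,T_{\FM^s})=0$, which the paper likewise quotes as known (Hitchin's argument, valid in any rank) rather than reproving. The only differences are cosmetic: you phrase the conclusion as ``every global vector field is vertical,'' while the paper packages it as a pair of mutually inverse injections between $H^0(\SM,T_f)$ and $H^0(\SM,T_{\SM})$.
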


\begin{proof}
Since the map $f_s$ is proper (see (\ref{de.fs})),
we have
$$
H^0(\SM^s,f_s^*T_{\FM^s})=H^0(\FM^s,T_{\FM^s})\, .
$$
It
is known that $H^0(\FM^s,T_{\FM^s})\,=\, 0$
(in \cite{H} this is proved for rank two, but the proof
works in general).
This, together with the exactness of the sequence
$$
0 \too H^0(\SM^s,T_{f_s}) \stackrel{j}{\too} H^0(\SM^s,T_{\SM^s}) \too H^0(\SM^s,f_s^*T_{\FM^s})
$$
implies that $j$ is an isomorphism.
In the proof of Lemma \ref{lem:dimtf} we have seen
that
$H^0(\SM,T_{f})\cong H^0(\SM^s,T_{f_s})$.
On the other hand,
$$
H^0(\SM,T_{\SM}) \hookrightarrow H^0(\SM^s,T_{\SM^s})
\cong H^0(\SM,T_f)
$$
has a left inverse $$H^0(\SM,T_f) \longrightarrow  H^0(\SM,T_{\SM})$$
given by the inclusion of sheaves $T_f\hookrightarrow T_{\SM}$. So
$H^0(\SM,T_{\SM})\cong H^0(\SM,T_f)$, and the Lemma is proved.
\end{proof}

Combining Lemmas \ref{lem:tft} and \ref{lem:dimtf} we
get the following corollary.

\begin{corollary}
\label{cor:dimtm}
  $\dim H^0(\SM,T_\SM)=r^2-1$.
\end{corollary}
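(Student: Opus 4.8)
The plan is simply to combine the two preceding lemmas, since the corollary is a direct assembly of their conclusions. First I would invoke Lemma \ref{lem:dimtf}, which asserts that the Lie algebra homomorphism $a:\mathfrak{pgl}_r \too H^0(\SM,T_f)$ is an isomorphism. Because the Lie algebra $\mathfrak{pgl}_r$ of $\PGL_r(\CC)$ has dimension $r^2-1$, this immediately gives $\dim H^0(\SM,T_f)=r^2-1$.

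Next I would apply Lemma \ref{lem:tft}, which supplies an isomorphism $H^0(\SM,T_f)=H^0(\SM,T_\SM)$ between the global sections of the vertical tangent sheaf and those of the full tangent sheaf. Chaining this with the previous equality yields $\dim H^0(\SM,T_\SM)=\dim H^0(\SM,T_f)=r^2-1$, which is exactly the asserted value.

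Since every ingredient has already been established, there is no real obstacle remaining at this stage; the entire content lives in Lemmas \ref{lem:dimtf} and \ref{lem:tft}. If I were to locate where the difficulty actually sits, it is in Lemma \ref{lem:tft}: the vanishing $H^0(\FM^s,T_{\FM^s})=0$ together with the codimension-at-least-two estimate of Lemma \ref{lem:codim} is what forces the inclusion of sheaves $T_f\hookrightarrow T_\SM$ to induce an isomorphism on global sections, so that no extra ``horizontal'' infinitesimal automorphisms (those moving the underlying bundle inside $\FM$) contribute. Granting those inputs, the corollary follows in a single line.
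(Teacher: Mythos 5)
Your proposal is correct and matches the paper exactly: the paper derives the corollary by combining Lemma \ref{lem:dimtf} (which gives $\dim H^0(\SM,T_f)=\dim\mathfrak{pgl}_r=r^2-1$) with Lemma \ref{lem:tft} (which identifies $H^0(\SM,T_f)$ with $H^0(\SM,T_\SM)$). Your closing remark locating the real content in Lemma \ref{lem:tft} is also accurate.
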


\begin{proposition}
\label{prop:uniqueaction}
There is a unique nontrivial action of $\PGL_r(\mathbb C)$ on $\SM$
up to a group automorphism of $\PGL_r(\mathbb C)$.
\end{proposition}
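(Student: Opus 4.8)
The plan is to identify the connected component of the full automorphism group of $\SM$ with $\PGL_r(\mathbb{C})$ itself, and then to exploit the simplicity of $\PGL_r(\mathbb{C})$. Since $\SM$ is smooth and projective (Lemma \ref{lem:dim-moduli}), the automorphism group $\operatorname{Aut}(\SM)$ is a group scheme locally of finite type over $\mathbb{C}$, and, working over $\mathbb{C}$, its identity component $G:=\operatorname{Aut}^0(\SM)$ is an algebraic group whose Lie algebra is $H^0(\SM,T_\SM)$. By Corollary \ref{cor:dimtm} we therefore have $\dim G=\dim H^0(\SM,T_\SM)=r^2-1=\dim\PGL_r(\mathbb{C})$.

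First I would analyze the natural action constructed above. It gives an algebraic group homomorphism $\phi:\PGL_r(\mathbb{C})\too G$ whose differential at the identity is the composite $\mathfrak{pgl}_r\stackrel{a}{\too}H^0(\SM,T_f)=H^0(\SM,T_\SM)$; this composite is an isomorphism by Lemma \ref{lem:dimtf} together with Lemma \ref{lem:tft}. Hence $\phi$ is \'etale at the identity, so its image is an open subgroup of the connected group $G$, and therefore $\phi$ is surjective. Its kernel is a normal subgroup with trivial Lie algebra, hence finite, and since $\PGL_r(\mathbb{C})$ is simple and centerless the kernel is trivial. Thus $\phi$ is an isomorphism $\PGL_r(\mathbb{C})\stackrel{\sim}{\too}G$.

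Next I would take an arbitrary nontrivial action, that is, a nontrivial homomorphism $\psi:\PGL_r(\mathbb{C})\too\operatorname{Aut}(\SM)$. Because $\PGL_r(\mathbb{C})$ is connected, the image of $\psi$ lies in $G$, so we may regard $\psi:\PGL_r(\mathbb{C})\too G$. Composing with $\phi^{-1}$ produces an endomorphism $\sigma:=\phi^{-1}\circ\psi$ of $\PGL_r(\mathbb{C})$. Since $\psi$ is nontrivial, so is $\sigma$; and again by simplicity and centerlessness of $\PGL_r(\mathbb{C})$, a nontrivial endomorphism has trivial kernel and is therefore injective, while the equality of dimensions forces it to be surjective. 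Hence $\sigma$ is a group automorphism of $\PGL_r(\mathbb{C})$, and $\psi=\phi\circ\sigma$ realizes $\psi$ as the standard action precomposed with $\sigma$. This is precisely the assertion that the nontrivial action is unique up to a group automorphism of $\PGL_r(\mathbb{C})$.

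The main obstacle is the first step, namely pinning down $G=\operatorname{Aut}^0(\SM)$ exactly, rather than merely up to finite index or isogeny. All the geometric input for this has already been assembled: the dimension equality $\dim H^0(\SM,T_\SM)=r^2-1$ from Corollary \ref{cor:dimtm} is what lets the \'etale-image argument close up to a genuine isomorphism, and the identification of the global vector fields with the vertical ones for $f$ (Lemma \ref{lem:tft}) is what guarantees that $d\phi$ is surjective. Once $G\cong\PGL_r(\mathbb{C})$ is established, the remainder is a formal consequence of the structure of the simple group $\PGL_r(\mathbb{C})$.
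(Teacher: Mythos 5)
Your proof is correct and follows essentially the same route as the paper: both arguments use Corollary \ref{cor:dimtm} to force the infinitesimal action of $\mathfrak{pgl}_r$ to be an isomorphism onto $H^0(\SM,T_\SM)$, and then invoke the simplicity and trivial center of $\PGL_r(\mathbb C)$ to convert the resulting comparison into a group automorphism. The only difference is one of packaging: the paper argues entirely at the Lie algebra level (observing that $i\circ a^{-1}$ is a Lie algebra automorphism of $\mathfrak{pgl}_r$, hence induced by a group automorphism), whereas you route the argument through the identity component $\operatorname{Aut}^0(\SM)$ of the automorphism group scheme, which has the modest merit of making explicit why agreement of infinitesimal actions forces agreement of the group actions.
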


\begin{proof}
  An effective action gives in injection
$i:\mathfrak{pgl}_r\inj H^0(\SM,T_\SM)$. Using
Corollary \ref{cor:dimtm} we conclude that this
homomorphism $i$ is an isomorphism.
Note that $i$ and $a$ are isomorphisms of Lie algebras,
therefore $i\circ a^{-1}$ is an automorphism of $\mathfrak{pgl}_r$
which comes from a group automorphism of $\PGL_r(\mathbb C)$.
\end{proof}

\section{The quotient of the action}

In the previous section we have seen that, up to an isomorphism,
there is a unique action of $\PGL_r(\mathbb C)$ on $\SM$.
In this section
we will see that the GIT quotient of $\SM$ by the action
of $\PGL_r(\mathbb C)$ 
is $\FM$.
Before taking the GIT quotient, we have to choose a linearized
polarization, but we will show that, in our case,
the quotient does
not depend on this choice.

Recall that $\Pic \FM = \ZZ$ (cf. \cite{DN}).
The following Lemma calculates $\Pic \SM$.

\begin{lemma}
\label{lem:pic}
  If $g>2$, then there is a short exact sequence
$$
1 \too \Pic \FM \too \Pic \SM \too \ZZ  \too 1 \; ,
$$
where the first map is pull-back of line bundles
from $\FM$, and the second map is restriction to
the generic fiber.
\end{lemma}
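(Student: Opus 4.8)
The plan is to reduce the computation of $\Pic\SM$ to the Picard group of the open locus $\SM^s$ and then to exploit the projective bundle structure $f_s\colon\SM^s\too\FM^s$ of (\ref{de.fs}). First I would observe that, since $\SM$ is smooth (Lemma \ref{lem:dim-moduli}) and the complement $Z=\SM\setminus\SM^s$ has codimension at least two (Lemma \ref{lem:codim}), the restriction map $\Pic\SM\too\Pic\SM^s$ is an isomorphism: on a smooth variety $\Pic=\operatorname{Cl}$, and Weil divisor classes are unaffected by removing a closed subset of codimension $\geq 2$. In the same way, using that $\FM$ is locally factorial with $\Pic\FM=\ZZ$ (\cite{DN}) and that the strictly semistable locus $\FM\setminus\FM^s$ has codimension $\geq 2$ (this is exactly the estimate carried out in Lemma \ref{lem:codim}, whose proof identifies the two codimensions), one gets $\Pic\FM=\Pic\FM^s=\ZZ$. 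Thus it suffices to produce a short exact sequence $1\too\Pic\FM^s\too\Pic\SM^s\too\ZZ\too 1$ with the two maps as described, and transport it back by these two isomorphisms.

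Next I would analyze $f_s$ through its generic fibre. Let $\eta=\Spec\CC(\FM)$ be the generic point; then $\SM_\eta$, the common generic fibre of $f$ and $f_s$, is a Brauer--Severi variety over $\CC(\FM)$ (a twisted form of $\PP^{r^2-1}$), since $f_s$ is the projective bundle $\PP(\Hom(\SE_x,\SO^r_{\FM^s}))$ only in the \'etale topology when the universal bundle fails to exist. For any Brauer--Severi variety over a field the Picard group is infinite cyclic, so $\Pic\SM_\eta\cong\ZZ$; this is the target of the second map, restriction to the generic fibre. Surjectivity of $\Pic\SM^s\too\Pic\SM_\eta$ is then immediate from normality of $\SM^s$: every divisor class on $\SM_\eta$ is the restriction of the class of the Zariski closure in $\SM^s$ of a representing divisor.

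For the kernel and the first map, note that a line bundle on $\SM^s$ whose restriction to $\SM_\eta$ is trivial is trivial on the fibres of $f_s$ over a dense open subset, hence (by the seesaw principle, using that $f_s$ is proper with geometrically connected fibres and $\FM^s$ is normal, so that ${f_s}_*\SO_{\SM^s}=\SO_{\FM^s}$) it is the pullback $f_s^*L$ of a unique $L\in\Pic\FM^s$. The same identity ${f_s}_*\SO_{\SM^s}=\SO_{\FM^s}$ and the projection formula show that $f_s^*$ is injective and that its image is trivial on the generic fibre, giving exactness in the middle. Transporting along the isomorphisms $\Pic\SM=\Pic\SM^s$ and $\Pic\FM=\Pic\FM^s$ yields the asserted sequence, with the first arrow $f^*$ and the second restriction to $\SM_\eta$.

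The step I expect to be the main obstacle is controlling the failure of $f_s$ to be a Zariski-locally trivial projective bundle: when no universal bundle exists on $\FM^s$ there is a nonzero Brauer class $\beta\in\operatorname{Br}(\FM^s)$, so one cannot simply quote the formula $\Pic(\PP(\mathcal V))=\Pic\oplus\ZZ$ and read off a relative $\SO(1)$. This is precisely why I route surjectivity through the generic fibre and the identification $\Pic\SM_\eta\cong\ZZ$, rather than through an honest relative $\SO_{\PP}(1)$ on $\SM^s$, which in general exists only after twisting (for instance $\SO_{\PP}(r)$ twisted by a suitable line bundle pulled back from $\FM^s$, coming from the determinant of the universal framed bundle, is representable, while $\SO_{\PP}(1)$ itself need not be). The remaining care lies in the two codimension-two statements, both supplied by Lemma \ref{lem:codim} together with the local factoriality of $\FM$.
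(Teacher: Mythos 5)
Your argument is correct and follows essentially the same route as the paper: reduce to $\SM^s$ and $\FM^s$ via the codimension-two estimates of Lemma \ref{lem:codim} (plus local factoriality of $\FM$), and then use the projective-bundle structure of $f_s$ to produce the exact sequence. The only difference is that you carefully justify the step the paper states in one line ("$\SM^s$ is a projective bundle, consequently we have a short exact sequence") by working with the generic fibre as a Brauer--Severi variety, which correctly handles the possible absence of a relative $\SO(1)$ on $\FM^s$.
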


\begin{proof}
The open set $\SM^s$ is a projective bundle over
$\FM^s$. Consequently, we have a short
exact sequence
$$
1 \too \Pic \FM^s \too \Pic \SM^s \too \ZZ  \too 1
$$
On the one hand, we have $\Pic \FM^s=\Pic \FM$, and on the other
hand, we have $\Pic \SM = \Pic \SM^s$ by Lemma \ref{lem:codim}.
Hence the lemma follows.
\end{proof}

To do the GIT quotient, we will replace the group
$\PGL_r(\mathbb C)$ by $\slr(\mathbb C)$. This is justified because a $\PGL_r(\mathbb C)$ action
induces an $\slr(\mathbb C)$ action, an $\slr(\mathbb C)$ linearization on
a line bundle $L$ induces a $\PGL_r(\mathbb C)$ linearization on $L^r$,
and both quotients coincide.

On $X\times \SM$ there is a vector bundle $\SE$ and
a homomorphism $$\wt \varphi: \SE_x \too p^*_X \SO^r_{x}$$
(where $\SE_x=\SE|_{x \times \SM}$)
such that $(\SE,\wt \varphi)$ is a universal pair
(cf. \cite[Theorem 0.1]{HL}). The
determinant of $\wt \varphi$ defines a section
of the line bundle
$\wedge^r \SE_x^\vee \otimes \wedge^r \CC^r$ on $\SM$.
Note that this line bundle admits a natural
linearization of the $\slr(\mathbb C)$--action. Furthermore,
the section $\det \wt \varphi$ is $\slr(\mathbb C)$--invariant with
respect to this linearization. We also note that
this linearization is unique up to an isomorphism. Indeed,
any two linearizations on the same line bundle
differ by a character of the group, but $\slr(\mathbb C)$ being
semisimple does not admit any nontrivial character.

\begin{lemma}
\label{lem:slrss}
  Let $(E,\varphi)$ be the $\tau$--stable pair corresponding
to a point in $\SM$.
This point is $\slr(\mathbb C)$--semistable with respect to
any linearized polarization
if and only if
$\varphi$ is an isomorphism between $E_x$ and $\CC^r$.
\end{lemma}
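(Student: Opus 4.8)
The plan is to exploit the distinguished $\slr(\CC)$--invariant section $\det\wt\varphi$ of $L_0:=\wedge^r\SE_x^\vee\otimes\wedge^r\CC^r$ introduced above, whose zero divisor $D$ is precisely the locus of pairs for which $\varphi$ fails to be an isomorphism. First I would dispose of the phrase ``any linearized polarization'' by showing that semistability here is independent of the chosen ample polarization. Since $\slr(\CC)$ acts trivially on $\FM$ and has no nontrivial characters, every line bundle pulled back from $\FM$ carries a trivial linearization and hence contributes $0$ to all Hilbert--Mumford weights. By Lemma \ref{lem:pic} the group $\Pic\SM$ is generated by such pullbacks together with a class restricting to $\SO(1)$ on the generic fibre of $f$, and one checks that $L_0$ restricts to $\SO(r)$ on the fibres, i.e. it maps to $r$ under $\Pic\SM\too\ZZ$. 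Thus for an ample linearized $L$ of fibre degree $c>0$ the bundles $L^{\otimes r}$ and $L_0^{\otimes c}$ differ by a pullback from $\FM$, whence $\mu^L(m,\lambda)=(c/r)\,\mu^{L_0}(m,\lambda)$ for every one--parameter subgroup $\lambda$. In particular $L$--semistability is equivalent to the numerical condition $\mu^{L_0}(m,\lambda)\ge 0$ for all $\lambda$, uniformly in $L$, so the words ``any'', ``every'' and ``some'' polarization all coincide.

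For the implication that $\varphi$ an isomorphism forces semistability, I would argue that non-vanishing of the invariant section $\det\wt\varphi$ at $m$ already gives $\mu^{L_0}(m,\lambda)\ge 0$ for all $\lambda$: along the orbit one has $(\det\wt\varphi)(\lambda(t)m)=\lambda(t)\cdot(\det\wt\varphi)(m)$, and a strictly negative weight of $\lambda$ on the fibre of $L_0$ over $x_0=\lim_{t\to 0}\lambda(t)m$ would make this regular section unbounded. This step uses nothing about stability of $E$, so it covers strictly semistable $E$ as well.

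For the converse, suppose $\varphi$ is not an isomorphism, so $W:=\im\varphi\subsetneq\CC^r$ has dimension $k<r$. I would first treat $E$ stable, where the fibre of $f$ is $\PP(\Hom(E_x,\CC^r))=\PP(E_x^\vee\otimes\CC^r)$ with $\slr(\CC)$ acting through $\CC^r$. Taking $\lambda$ diagonal with weight $r-k$ on $W$ and $-k$ on a complement makes $m$ a $\lambda$--fixed point (since $\lambda(t)\varphi=t^{\,r-k}\varphi$ and framings are taken up to scalar); using $\SE_x|_{\text{fibre}}=E_x\otimes\SO(-1)$, so that $L_0$ restricts to $\SO(r)$, the weight of $\lambda$ on $L_{0,m}$ comes out to $-r(r-k)<0$, giving $\mu^{L_0}(m,\lambda)<0$ and hence instability. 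To pass to arbitrary semistable $E$, note that the part of $D$ lying over non-stable bundles is contained in $f^{-1}(\FM\setminus\FM^s)$, which has codimension $\ge 2$ in $\SM$ by Lemma \ref{lem:codim}; therefore $D\cap f^{-1}(\FM^s)$ is dense in the divisor $D$. As the unstable locus is closed and contains this dense subset, it contains all of $D$.

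The main obstacle is the weight computation in the stable case together with the precise identification of the induced $\slr(\CC)$--linearization on $L_0$: one must pin down this linearization carefully, since the naive ``trivial action on $E$'' is wrong and it is exactly the $\SO(-1)$--twist in $\SE_x|_{\text{fibre}}=E_x\otimes\SO(-1)$ that records the action, and one must fix sign conventions so that non-vanishing of $\det$ corresponds to $\mu\ge 0$. Once this is settled, the polarization-independence reduction and the density argument are formal.
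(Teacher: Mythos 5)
Your proposal is correct and follows essentially the same route as the paper: reduction to the distinguished line bundle $\wedge^r\SE_x^\vee\otimes\wedge^r\CC^r$ via Lemma \ref{lem:pic} and the absence of nontrivial characters of $\slr(\CC)$, the invariant section $\det\wt\varphi$ for sufficiency, and the very same one--parameter subgroup (weights $r-k$ on $\im\varphi$ and $-k$ on a complement) for necessity. The paper phrases the destabilization simply as $\lambda(t)\cdot\varphi = t^{\,r-k}\varphi\too 0$ in the affine cone, so that every invariant homogeneous section vanishes at the point; this works uniformly on every fibre of $f$ and lets you skip both the fixed--point weight computation (with its sign conventions) and the closure/density argument over the non--stable locus.
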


\begin{proof}
Let $L$ be an $\slr(\mathbb C)$--linearized polarization on $\SM$.
The $\slr(\mathbb C)$ action is $f$--invariant, so we may
replace $\SM$ by the fiber of $f$ containing the
point, and $L$ by its restriction to the fiber.
By Lemma \ref{lem:pic}, the group of line bundles
on a fiber of $f$ which are restrictions of line bundles
on $\SM$ is isomorphic to $\mathbb Z$.
Therefore, we may assume that the
restriction, to a fiber of $f$,  of the
$\slr(\mathbb C)$--linearized line bundle $L$
is isomorphic to the restriction of the line bundle
$\wedge^r \SE_x^\vee \otimes \wedge^r \CC^r$,
introduced above, equipped with the natural linearization.

  The $\slr(\mathbb C)$--invariant section $\det \wt \varphi$ is nonzero
when $\varphi$ is an isomorphism, and this proves that the
point is $\slr(\mathbb C)$-semistable.

On the other hand, if $\varphi$ is not an isomorphism, then
fix a decomposition $\CC^r=V_1\oplus V_2$, where
$V_1$ is the image of $\varphi$ and $V_2$ is a linear
complement. Let $\lambda(t)$ be the one parameter subgroup of $\slr(\mathbb C)$
which acts as $t^{\dim V_2}$ on $V_1$ and $t^{-\dim V_1}$ on $V_2$.
It follows that
$$
\lim_{t\too 0} \varphi\cdot \lambda(t)=
\lim_{t\too 0} \varphi \; t^{\dim V_2}   =
0\, .
$$

We note that any section $s$ of $L$ can be thought as
a homogeneous function. If this function is $\slr(\mathbb C)$--invariant,
$$
s(\varphi)=s(\lim_{t\too 0} \varphi\cdot \lambda(t))=s(0)=0 \; .
$$
This
proves that $(E,\varphi)$ is $\slr(\mathbb C)$--unstable when $\varphi$ is not
an isomorphism.
\end{proof}

\begin{proposition}
  \label{prop:git}
The GIT quotient of $\SM$ by the action of $\PGL_r(\mathbb C)$, with
respect to any linearized polarization, is isomorphic to
$\FM$. Furthermore, the quotient morphism, which in principle is only
defined on the $\PGL_r(\mathbb C)$--semistable points, extends uniquely to
the entire $\SM$, and this extension coincides with the forgetful
morphism $f$ defined in ~(\ref{eq:forget}).
\end{proposition}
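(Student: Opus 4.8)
The plan is to show that, after replacing $\PGL_r(\mathbb C)$ by $\slr(\mathbb C)$ as explained above, the forgetful morphism $f$ of (\ref{eq:forget}) \emph{is} the GIT quotient. Write $\SM^{ss}$ for the locus of $\slr(\mathbb C)$--semistable points. By Lemma \ref{lem:slrss}, and since every point of $\SM$ is $\tau$--stable (Lemma \ref{smalltau}(3)), one has $\SM^{ss}=\{(E,\varphi)\mid \varphi \text{ is an isomorphism}\}$ \emph{independently of the chosen linearized polarization}; this already reduces the assertion to the identification of a single quotient. Because $\dim E_x=r$ for every $E$, each semistable bundle admits some isomorphism $\varphi$, so $f(\SM^{ss})=\FM$, i.e. $f|_{\SM^{ss}}$ is surjective. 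The action of $\slr(\mathbb C)$ fixes the underlying bundle, so $f$ is $\slr(\mathbb C)$--invariant; restricting to $\SM^{ss}$ and using that the good quotient $q\colon \SM^{ss}\too \SM\gitq \slr(\mathbb C)$ is a categorical quotient, the invariant morphism $f|_{\SM^{ss}}$ factors uniquely as $f|_{\SM^{ss}}=\bar f\circ q$ for a morphism $\bar f\colon \SM\gitq\slr(\mathbb C)\too \FM$. Both source and target are projective, so $\bar f$ is proper, and it is surjective since $f|_{\SM^{ss}}$ is.

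Next I would identify $\bar f$ over the stable locus. Over $\FM^s$ we have $f^{-1}(\FM^s)\cap \SM^{ss}=\SM^0$, which by the proof of Lemma \ref{lem:dimtf} is a principal $\PGL_r(\mathbb C)$--bundle over $\FM^s$; hence $\bar f$ restricts to an isomorphism over the dense open set $\FM^s$, so $\bar f$ is birational. Since $\SM$ is smooth (Lemma \ref{lem:dim-moduli}), the quotient $\SM\gitq\slr(\mathbb C)$ is normal, and $\FM$ is normal as well. Thus a proper birational morphism between these normal projective varieties is an isomorphism as soon as it is quasi--finite: quasi--finite plus proper gives finite, and a finite birational morphism onto a normal variety is an isomorphism (Zariski's main theorem). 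So the whole problem is reduced to showing that $\bar f$ is quasi--finite, and since $\bar f$ is already an isomorphism over $\FM^s$, only the strictly semistable locus remains to be controlled.

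The main obstacle is precisely this behaviour of $\bar f$ over the strictly semistable locus: I must show that over each $S$--equivalence class $[E_0]$ there is a \emph{unique} closed $\slr(\mathbb C)$--orbit inside $\SM^{ss}$, so that $\bar f$ has finite (indeed single--point) fibers there. The delicate point is that, although the $\slr(\mathbb C)$--action fixes the underlying bundle along any orbit, the \emph{limit} of an orbit as the framing $\varphi$ degenerates to a non--isomorphism need not have the same underlying bundle, because the limit point of $\SM$ is again a $\tau$--stable pair and is obtained from the degenerating family by a Langton--type modification. Concretely, given two $\slr(\mathbb C)$--semistable pairs $(E_1,\varphi_1)$ and $(E_2,\varphi_2)$ with $E_1$ and $E_2$ $S$--equivalent, I would use the Jordan--H\"older filtration to degenerate each to the common polystable bundle $\grq(E_0)=\bigoplus_j Q_j^{a_j}$, and choose one--parameter subgroups of $\slr(\mathbb C)$ adapted to the framing (as in the destabilizing direction produced in the proof of Lemma \ref{lem:slrss}) so that both orbit closures contain one and the same closed orbit lying over $[E_0]$. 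Properness of $\SM$ guarantees the relevant limits exist, and smallness of $\tau$ keeps all intermediate pairs $\tau$--stable; this exhibits $(E_1,\varphi_1)$ and $(E_2,\varphi_2)$ as GIT--equivalent and yields injectivity, hence quasi--finiteness, of $\bar f$. Combined with the previous paragraph, ZMT then gives $\SM\gitq\slr(\mathbb C)\cong\FM$.

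Finally, for the extension statement: the quotient morphism is a priori defined only on $\SM^{ss}$, but $f\colon \SM\too\FM$ is defined on all of $\SM$ and, under the isomorphism $\bar f$, agrees with $q$ on $\SM^{ss}$. Since $\SM^{ss}$ is a nonempty open subset of the irreducible variety $\SM$ it is dense, and $\FM$ is separated, so $f$ is the unique extension of the quotient morphism to the whole of $\SM$. This is exactly the content of the proposition.
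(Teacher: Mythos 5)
Your overall architecture agrees with the paper's: identify $\SM^{ss}$ via Lemma \ref{lem:slrss}, factor $f|_{\SM^{ss}}$ through the quotient, observe that the induced map $\bar f$ is an isomorphism over $\FM^s$ because $\SM^0\to\FM^s$ is a principal bundle, and reduce everything to showing that the fiber of $\bar f$ over a strictly semistable point is a single point (your ZMT packaging of this reduction is fine, as is the final density/separatedness argument for the extension). The gap is in the one step you yourself flag as ``the main obstacle''. To identify the images of $(E_1,\varphi_1)$ and $(E_2,\varphi_2)$ in the quotient you propose to drive each to a common closed orbit by one--parameter subgroups of $\slr(\mathbb C)$ acting on the framing. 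But a one--parameter subgroup of $\slr(\mathbb C)$ fixes the underlying bundle along its orbit and degenerates $\varphi$ towards a non--isomorphism; that is exactly the direction used in the proof of Lemma \ref{lem:slrss} to show that such limits are \emph{unstable}. Hence these limits leave $\SM^{ss}$, whereas GIT identification requires the orbit closures to meet \emph{inside} the semistable locus. The appeal to a ``Langton--type modification'' changing the underlying bundle in the limit does not repair this: you would still need to produce a limit point lying in $\SM^{ss}$, and you give no mechanism that does so. As written, your argument is compatible with the orbit of $(E_1,\varphi_1)$ being already closed in $\SM^{ss}$, i.e., with $\bar f$ failing to be injective over the strictly semistable locus.

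The paper's mechanism for this step is different and does not use the group action at all: given $E_1$ $S$--equivalent to the polystable bundle $E_p$, take the standard degeneration of $E_1$ to $E_p$ over $\BA^1$ (built from the Jordan--H\"older extension structure), note that the restriction of this family to $\{x\}\times\BA^1$ is a vector bundle on $\BA^1$ and hence trivial, and choose a trivialization; this frames the entire family by isomorphisms, so the resulting curve lies in $\SM^{ss}$ and joins the orbit corresponding to $E_1$ to the orbit corresponding to $E_p$. Continuity of the quotient map then forces the two orbits to have the same image. Replacing your one--parameter--subgroup argument by a degeneration of the \emph{bundle} of this kind closes the gap; the rest of your proof stands.
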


\begin{proof}
By Lemma \ref{lem:slrss}, the open set $\SM^{ss}$
of $\slr(\mathbb C)$--semistable points coincides with the set 
of
pairs $(E,\varphi)$ such that $\varphi$ is an isomorphism.

Since $f:\SM\too \FM$ is $\PGL_r(\mathbb C)$--invariant, its
restriction to $\SM^{ss}$ factors
as
$$
\SM^{ss}\too \SM\gitq\PGL_r(\mathbb C)
\stackrel{\widetilde g}\too \FM\, ,
$$
where $\SM\gitq\PGL_r(\mathbb C)$ is the GIT quotient.
We want to show that $\widetilde g$
is actually an isomorphism.

  The open subset $\SM^{ss}\cap\SM^s$ coincides with the 
  set $\SM^0$ defined in Lemma \ref{lem:dimtf}. It is a fibration
over $\FM^s$ whose fiber over the point
corresponding to a stable vector bundle $E$
is $\PP(\operatorname{Iso}(E_x,\CC^r))$. The variety
$\FM$ is known to be normal. Therefore,
the morphism $\widetilde g$ is an isomorphism over $\FM^s$.
Hence to show that $\widetilde g$ is an isomorphism
it is sufficient to show that the fiber of $\widetilde g$ over
a strictly semistable vector bundle is just one point.

Let $p\in \FM$ be a point corresponding to a
polystable vector bundle $E_p$ which is not
stable. The fiber $f^{-1}(p)$ is
the set of all $\tau$--stable pairs
$(E,\varphi)$, where $E$ is $S$--equivalent to $E_p$ and $\varphi$
is an isomorphism.  Two pairs $(E,\varphi)$ and
$(E',\varphi')$ are in the same orbit if and only if
$E$ is isomorphic to $E'$.

Let $E_1$ be a vector bundle $S$--equivalent to $E_p$.
There is a family of vector bundles
parameterized by $\BA^1$, with $E_0\cong E_p$
such that $E_t\cong E_1$ when $t\neq 0$. This family is given
by a vector bundle on $X\times \BA^1$, whose restriction to
$\{x\}\times \BA^1$ is obviously trivial. We note that choosing a
trivialization amounts to giving a family of
pairs $(E_{\BA^1},\varphi_{\BA^1})$ such that, for all $t\in \BA^1$,
$\varphi_t$ is an isomorphism, and hence
$(E_t,\varphi_t)$ is $\slr(\mathbb C)$--semistable
(cf. Lemma \ref{prop:git}).
If $t\neq 0$, the pair $(E_t,\varphi_t)$ lies in the
$\slr$--orbit corresponding to $E_1$,
and if $t=0$, then it lies
in the one corresponding to $E_0$,
So, by continuity, both orbits are mapped to the same
point in $\SM\gitq \PGL_r(\mathbb C)$, proving that $\widetilde g$
is an isomorphism.

Finally, since each point of $\SM$ is in the closure of a
unique fiber of $\SM^{ss}\too \SM\gitq\PGL_r(\mathbb C)$, the
quotient map can be extended to the whole of $\SM$, and
it coincides with the forgetful morphism $f$.
\end{proof}

Combining Proposition \ref{prop:git} with Proposition
\ref{prop:uniqueaction} we conclude that the isomorphism
class of the variety $\SM$ uniquely determines the
isomorphism class of the variety $\FM$. Therefore,
the Torelli theorem for $\FM$,
\cite[Theorem E, p. 229]{KP}, gives the following corollary.

\begin{corollary}\label{tor.SM}
The isomorphism class of the variety $\SM$ uniquely
determines the isomorphism class of the curve $X$.
\end{corollary}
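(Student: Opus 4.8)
The plan is to reconstruct the curve $X$ from the abstract variety $\SM$ in two stages: first extract the moduli space $\FM$ of semistable bundles from the variety structure of $\SM$ alone, and then feed $\FM$ into the already known Torelli theorem for $\FM$ to recover $X$.

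To make the first stage canonical, I would begin by pinning down the rank. By Corollary \ref{cor:dimtm}, the number $\dim H^0(\SM,T_\SM)=r^2-1$ is an intrinsic invariant of the variety $\SM$, so $r$ is determined by the isomorphism class of $\SM$; in particular the acting group $\PGL_r(\CC)$ is determined. Next, Proposition \ref{prop:uniqueaction} guarantees that $\SM$ carries a nontrivial action of $\PGL_r(\CC)$ which is unique up to a group automorphism of $\PGL_r(\CC)$. Since precomposing an action with a group automorphism does not change its collection of orbits, the orbit structure, and hence the GIT quotient, depends only on the isomorphism class of $\SM$.

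I would then invoke Proposition \ref{prop:git}, which identifies the GIT quotient $\SM\gitq\PGL_r(\CC)$ with $\FM$ and, crucially, asserts that this quotient is independent of the choice of linearized polarization. Combining this with the previous step, the variety $\FM$ is recovered, up to isomorphism, from the abstract variety $\SM$. Finally, the Torelli theorem for $\FM$ of Kouvidakis and Pantev \cite[Theorem E, p. 229]{KP} states that the isomorphism class of $\FM$ determines the isomorphism class of $X$, and composing the two recoveries yields the corollary.

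The only genuine point to check, and the place where the argument could fail, is the canonicity of the whole construction: that recovering $\FM$ uses nothing beyond the abstract variety $\SM$. This is precisely the content supplied by Propositions \ref{prop:uniqueaction} and \ref{prop:git}, namely the uniqueness of the $\PGL_r(\CC)$-action up to automorphism together with the polarization-independence of the quotient. All the substantive work (the computation $\dim H^0(\SM,T_\SM)=r^2-1$ and the identification of the quotient with $\FM$) has been carried out beforehand, so the corollary itself is obtained by assembling these ingredients.
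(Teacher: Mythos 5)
Your argument is correct and follows essentially the same route as the paper: the paper likewise combines Proposition \ref{prop:uniqueaction} (uniqueness of the $\PGL_r(\mathbb C)$--action) with Proposition \ref{prop:git} (polarization--independent identification of the GIT quotient with $\FM$) and then applies the Kouvidakis--Pantev Torelli theorem for $\FM$. Your additional remark that $r$ itself is first recovered from $\dim H^0(\SM,T_\SM)=r^2-1$ is a point the paper only makes explicit later, in the proof of Theorem \ref{thm:main}, but it is the same argument.
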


\section{Restriction to Hecke cycle}

The open subset $\SM^s$ of $\SM$ lying over the stable
locus $\FM^s$ is a projective bundle
\begin{equation}\label{pxd}
\PP_x\, :=\, \SM^s\, \longrightarrow\, \FM^s
\end{equation}
whose fiber
over a point corresponding to the stable vector bundle
$E$ is canonically isomorphic to
$\PP(\SE_x^\vee\otimes \SO_{\FM^s}^r)$.
In this section we calculate the restriction of this projective
bundle to certain subvarieties which are called Hecke cycles.

Let $l$ and $m$ be integers. A vector bundle $E$ is called
$(l,m)$-stable if for all proper subbundles
$E'$, the following is satisfied:
$$
\frac{\deg E' +l}{\rk E'}
<
\frac{\deg E +l-m}{\rk E} \; .
$$
Let $y\in X$ be a point of the curve. If $(g,r,d)$
is different
from $(2,2,\textup{even})$, then there exist a $(0,1)$-stable
vector bundle $F$ of
rank $r$ and determinant $\xi\otimes {\mathcal O}_X(y)$
(cf. \cite[Proposition 5.4]{NR}).
On $\PP(F_y^\vee)\times X$ there is a short exact sequence
\begin{equation}
  \label{eq:hecke}
  0 \too \SG \too p^*_X F \too \iota_*
\SO_{\PP(F_y^\vee)}(1)\too 0\, ,
\end{equation}
where $\iota\,:\, \PP(F_y^\vee)\, \longrightarrow\,
\PP(F_y^\vee)\times X$ is the map defined by
$z\, \longmapsto\, (z\, , y)$.

The vector bundle $\SG$ is a family of stable bundles because $F$ is
$(0,1)$-stable (cf. \cite[Lemma 5.5]{NR}), and the classifying
morphism $\PP(F_y^\vee)\too \FM$ is an embedding (cf. \cite[Lemma
5.9]{NR}). Its image, which we identify with $\PP(F_y^\vee)$, is
called a \textit{Hecke cycle}.

\begin{lemma}
\label{lem:hecke}
The restriction of the projective fibration $\PP_x\too \FM^s$
(see (\ref{pxd})) to the Hecke cycle
$\PP(F_y^\vee)$ is non-trivial if and only if $x=y$.
\end{lemma}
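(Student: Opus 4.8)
The plan is to compute the restriction of the projective fibration $\PP_x$ to the Hecke cycle explicitly and to read off its (non)triviality from the underlying vector bundle. Write $P := \PP(F_y^\vee)$ and let $\SG_x := \SG|_{P\times\{x\}}$ be the restriction of the universal bundle $\SG$ of (\ref{eq:hecke}). Since $\SG$ is a family of stable bundles, the classifying map $P\too\FM^s$ pulls $\PP_x$ back to the projective bundle $\PP(\SG_x^\vee\otimes\SO_P^r)$ over $P$, whose fiber over $z$ is $\PP(\Hom((\SG_x)_z,\CC^r))$. A projective bundle $\PP(W)$ is trivial precisely when $W$ is a trivial bundle tensored by a line bundle, and since $\SG_x^\vee\otimes\SO_P^r=(\SG_x^\vee)^{\oplus r}$, the Krull--Schmidt theorem shows that $\PP((\SG_x^\vee)^{\oplus r})$ is trivial if and only if $\PP(\SG_x^\vee)$ is, i.e. if and only if $\SG_x$ is trivial up to a line--bundle twist. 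So the statement reduces to determining $\SG_x$.

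First I would treat the easy direction $x\neq y$. Restricting (\ref{eq:hecke}) to $P\times\{x\}$ and using that $\iota_*\SO_P(1)$ is supported on $P\times\{y\}$, both this sheaf and the relevant $\operatorname{Tor}_1$ term vanish away from $y$; hence $\SG_x\cong F_x\otimes\SO_P$ is a trivial bundle and the restricted projective fibration is trivial.

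For $x=y$ I would restrict (\ref{eq:hecke}) to the divisor $D=P\times\{y\}$ by means of the local $\operatorname{Tor}$ sequence. The surjection $p_X^*F\too\iota_*\SO_P(1)$ restricts to the tautological quotient $F_y\otimes\SO_P\too\SO_P(1)$, whose kernel is $\Omega^1_P(1)$ by the Euler sequence; and since the normal bundle of $D$ in $P\times X$ is trivial, $\operatorname{Tor}_1(\iota_*\SO_P(1),\SO_D)\cong\SO_P(1)$. This yields the four--term sequence
$$
0\too\SO_P(1)\too\SG_y\too F_y\otimes\SO_P\too\SO_P(1)\too 0,
$$
equivalently an extension $0\too\SO_P(1)\too\SG_y\too\Omega^1_P(1)\too 0$. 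Because $\operatorname{Ext}^1(\Omega^1_P(1),\SO_P(1))\cong H^1(P,T_P)=0$, this extension splits and $\SG_y\cong\SO_P(1)\oplus\Omega^1_P(1)$.

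It remains to show that $\SG_y$ is not trivial up to a twist. The sequences above give $c(\SG_y)=1$, so if $\SG_y\cong L^{\oplus r}$ the identity $(1+c_1(L))^r=1$ in $H^*(P)=\ZZ[h]/(h^r)$ forces $c_1(L)=0$ and hence $\SG_y\cong\SO_P^{\oplus r}$; but the direct summand $\SO_P(1)\not\cong\SO_P$ contradicts this by Krull--Schmidt. Thus $\PP(\SG_y^\vee\otimes\SO_P^r)$ is nontrivial, which together with the previous paragraph establishes the \emph{if and only if}. The point I expect to be the main obstacle is exactly this last step: since $\SG_y$ has trivial total Chern class, no numerical invariant can separate it from a trivial bundle, so one is forced to compute the honest isomorphism type $\SO_P(1)\oplus\Omega^1_P(1)$ (through the restriction/$\operatorname{Tor}$ computation and the vanishing $H^1(P,T_P)=0$) and to invoke uniqueness of the indecomposable decomposition.
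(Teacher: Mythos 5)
Your proof is correct and follows essentially the same route as the paper: restrict the defining sequence (\ref{eq:hecke}) of the Hecke family to $\PP(F_y^\vee)\times\{x\}$, obtain the four--term sequence (\ref{ex.l}) when $x=y$, and conclude that $\SG|_{\PP(F_y^\vee)\times\{y\}}$ cannot be a line--bundle twist of the trivial bundle. The only differences are cosmetic refinements on your part --- you identify the leftmost term $\SO_P(1)$ via the $\operatorname{Tor}_1$ computation where the paper uses a degree--continuity argument, and you split the extension explicitly to get $\SO_P(1)\oplus\Omega^1_P(1)$ where the paper just observes that a degree--zero bundle with a degree--one line subbundle cannot be $L^{\oplus r}$; both are valid.
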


\begin{proof}
By the universal property of the classifying morphism,
the restriction of $\PP_x$ to the Hecke cycle is
$\PP(\SG|_{\PP(F_y^\vee)\times\{x\}}^\vee \otimes \SO^r)$.
If $x\neq y$, the restriction of (\ref{eq:hecke}) to
$\PP(F_y^\vee)\times\{x\}$ is
$$
0\too \SG|_{\PP(F_y^\vee)\times\{x\}} \stackrel{\cong}{\too} F_y\otimes
\SO_{\PP(F_y^\vee)} \too 0
$$
and hence the projective fibration is trivial on the Hecke cycle.
On the other hand, if $x=y$, then the restriction
of (\ref{eq:hecke}) to $\PP(F_y^\vee)\times \{x\}$ is
\begin{equation}\label{ex.l}
0 \too \SO_{\PP(F_y^\vee)}(1) \too \SG|_{\PP(F_y^\vee)\times\{x\}}
\too F_x\otimes \SO_{\PP(F_y^\vee)}
\too \SO_{\PP(F_y^\vee)}(1) \too 0\, .
\end{equation}
To prove that the line bundle in the left of
(\ref{ex.l}) is indeed
$\SO_{\PP(F_y^\vee)}(1)$, note that
$\deg (\SG|_{\PP(F_y^\vee)\times\{x\}})=0$ for $x\neq y$
because in that case $\SG|_{\PP(F_y^\vee)\times\{x\}}$ is a trivial
vector bundle. Hence by continuity, we have
$\deg (\SG|_{\PP(F_y^\vee)\times\{y\}})=0$, hence the
line bundle in the left of (\ref{ex.l}) is $\SO_{\PP(F_y^\vee)}(1)$.

We will show that
$\PP(\SG|_{\PP(F_y^\vee)\times\{x\}})$ is not trivial for $x=y$.
To prove this by contradiction,
assume that  $\PP(\SG|_{\PP(F_y^\vee)\times\{x\}})$ is trivial.
Consequently,
$\SG|_{\PP(F_y^\vee)\times\{x\}}$ is the tensor product of
the trivial vector bundle and
a line bundle. On the other hand, the
exact sequence in (\ref{ex.l}) shows that
$\SG|_{\PP(F_y^\vee)\times\{x\}}$
has degree zero and admits a line subbundle of degree one.
Therefore, $\SG|_{\PP(F_y^\vee)\times\{x\}}$ is
not the tensor product of the trivial vector bundle with
a line bundle.

Hence $\PP(\SG|_{\PP(F_y^\vee)\times\{x\}})$ is not trivial for $x=y$.
This completes the proof of the lemma.
\end{proof}

In \cite{KP}, the group of all automorphisms of the variety
$\FM$ is described (see \cite[p. 227, Theorem A]{KP}). We
recall that $\text{Aut}(\FM)$ is generated
by all automorphisms of the following three type:
\begin{itemize}
\item $E\, \longmapsto\, E\otimes L$, where $L$ is
a line bundle of order $r$.

\item $E\, \longmapsto\, E^*\otimes L$, where $L$
is a fixed line bundle.

\item $E\, \longmapsto\, L\otimes \sigma^*E$, where
$\sigma$ is an automorphism of $X$.
\end{itemize}

We note that tensoring of a vector bundles by a line
bundle does not alter the projective bundle. Also,
a projective bundle is trivial if and only if the dual
projective bundle is trivial.
Therefore, Lemma \ref{lem:hecke} has the following
corollary:

\begin{corollary}\label{cor.hec}
Let $\tau\, :\, \FM\, \longrightarrow\, \FM$ be an automorphism
such that the restriction of the projective fibration
$\tau^*\PP_x\too \FM^s$ to the Hecke cycle
$\PP(F_y^\vee)$ is non-trivial. Then there is an automorphism
$\sigma$ of $X$ such that $\sigma(x)\, =\, y$.
\end{corollary}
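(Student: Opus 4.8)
The plan is to decompose the automorphism $\tau$ into the generators of $\text{Aut}(\FM)$ recalled just above, determine the effect of each generator on the projective bundle $\PP_x\too\FM^s$, and then read off the conclusion from Lemma \ref{lem:hecke}. Since an automorphism of $\FM$ preserves the stable locus, $\tau$ restricts to an automorphism of $\FM^s$, and the Hecke cycle $\PP(F_y^\vee)$ lies in $\FM^s$ because $\SG$ is a family of stable bundles. The starting observation is that the fiber of $\tau^*\PP_x$ over a stable bundle $E$ is the fiber of $\PP_x$ over $\tau(E)$, namely $\PP(\tau(E)_x^\vee\otimes\CC^r)$, so everything reduces to computing $\tau(E)_x$.

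The key computation is to evaluate this fiber for each of the three generator types, using the two invariance facts recorded before the statement: tensoring by a line bundle does not change a projective bundle, and a projective bundle is trivial exactly when its dual is. For $E\mapsto E\otimes L$ one has $\tau(E)_x=E_x\otimes L_x$, and since $L_x$ is a fixed one--dimensional space the fiber is unchanged, so $\tau^*\PP_x\cong\PP_x$. For $E\mapsto E^*\otimes L$ the fiber becomes $\PP(E_x\otimes\CC^r)$, which is the dual projective bundle of $\PP_x$. For $E\mapsto L\otimes\sigma^*E$ one has $\tau(E)_x=L_x\otimes E_{\sigma(x)}$, so the fiber is $\PP(E_{\sigma(x)}^\vee\otimes\CC^r)$, and hence $\tau^*\PP_x\cong\PP_{\sigma(x)}$, where $\PP_{\sigma(x)}$ denotes the projective bundle obtained by the same construction as $\PP_x$ using the slice at $\sigma(x)$.

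Composing generators, and tracking that only the third type moves the distinguished point, while the second introduces at most a dualization and all three introduce at most line--bundle twists, I would conclude that for a general $\tau$ there is an automorphism $\sigma$ of $X$ such that $\tau^*\PP_x$ agrees with $\PP_{\sigma(x)}$ up to taking duals and tensoring by line bundles. By the two invariance facts, the restriction of $\tau^*\PP_x$ to the Hecke cycle $\PP(F_y^\vee)$ is therefore non-trivial precisely when the restriction of $\PP_{\sigma(x)}$ is. Applying Lemma \ref{lem:hecke} with the distinguished point $\sigma(x)$ in place of $x$, this restriction is non-trivial if and only if $\sigma(x)=y$. Since non-triviality holds by hypothesis, we obtain $\sigma(x)=y$, as desired.

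I would expect the main obstacle to be the per--generator computation of $\tau^*\PP_x$, specifically verifying that the identifications $\tau^*\PP_x\cong\PP_{\sigma(x)}$ (and its dual for the second type) are genuine isomorphisms of projective bundles over the whole of $\FM^s$, and that they compose correctly, rather than holding only fiber by fiber. Once this bookkeeping with the universal projective bundle is in place, the appeal to Lemma \ref{lem:hecke} is immediate.
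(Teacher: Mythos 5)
Your argument is correct and is essentially the paper's own (the paper leaves the proof implicit, citing the Kouvidakis--Pantev generators of $\mathrm{Aut}(\FM)$ together with the two invariance remarks and Lemma \ref{lem:hecke}, exactly the ingredients you combine). Your per-generator computation of $\tau^*\PP_x$ and the reduction to $\PP_{\sigma(x)}$ up to duals and line-bundle twists is precisely the intended reasoning.
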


\section{Proof of Theorem \ref{thm:main}}
\label{sec:mainthm}

We start with $\SM$ as an abstract variety.
Since $\dim H^0(\SM,T_\SM)=r^2-1$ (see Corollary
\ref{cor:dimtm}),
we recover $r$, and since the dimension of $\SM$
is $(r^2-1)g$ (see Lemma \ref{lem:dim-moduli}),
we recover $g$.

Up to an automorphism of $\PGL_r(\mathbb C)$,
there is a unique action of $\PGL_r(\mathbb C)$ on $\SM$
(see Proposition \ref{prop:uniqueaction}),
so we recover the action.

Choose any linearized polarization of $\SM$. The GIT quotient
for the action of $\PGL_r(\mathbb C)$ gives a morphism defined on the
semistable points, but
Proposition \ref{prop:git} shows that this can be extended uniquely
to a morphism on $\SM$, and that the image is isomorphic to $\FM$,
where $\FM$ is
the moduli space of semistable vector bundles of rank $r$ and
fixed determinant $\xi$.
Therefore, using the Torelli theorem for the moduli space of
semistable vector bundles, we recover the curve $X$
(see Corollary \ref{tor.SM}).

At this point we have a projective scheme $\SM$ with a
morphism $f:\SM\too \FM$, together with an
action of $\PGL_r(\mathbb C)$ on $\SM$ that commutes with $f$.
The open subset $\SM^s$ of $\SM$ lying over the stable
locus $\FM^s$ is a projective bundle $\PP_x$ whose fiber
over a point corresponding to the stable vector bundle
$E$ is canonically isomorphic to
$\PP(\SE_x^\vee\otimes \SO_{\FM^s}^r)$.

In order to
recover this point $x$, it suffices to show that the projective
bundles $\PP_x\too \FM^s$
and $\PP_y\too \FM^s$ are not isomorphic if
$x\neq \sigma(y)$ for all $\sigma\,\in\, \text{Aut}(X)$.
This follows from Corollary \ref{cor.hec} and
Lemma \ref{lem:hecke}. This finishes the proof
of Theorem \ref{thm:main}.

\medskip

\noindent {\bf Acknowledgments.} We would like to thank Marina Logares for
useful discussions.

\end{document}